\numberwithin{equation}{section}
\DeclareMathOperator{\Ram}{Ram}
\DeclareMathOperator{\lcm}{lcm}
\newcommand{\cir}[1]{\langle #1 \rangle}
\begin{document}

\renewcommand{\proofname}{Proof}
\renewcommand{\Re}{\operatorname{Re}}
\renewcommand{\Im}{\operatorname{Im}}
\renewcommand{\labelitemi}{$\bullet$}

\newtheorem{theorem}{Theorem}[section]
\newtheorem{proposition}[theorem]{Proposition}
\newtheorem{lemma}[theorem]{Lemma}
\newtheorem{corollary}[theorem]{Corollary}
\newtheorem{conjecture}[theorem]{Conjecture}

\theoremstyle{definition}
\newtheorem{definition}[theorem]{Definition}
\newtheorem{example}{Example}[section]
\theoremstyle{remark}
\newtheorem{remark}{Remark}[section]
\newtheorem*{claim}{Claim}


\title{New nonabelian Hodge graphs from twisted irregular connections}

\author{Jean Douçot}

\address[J.~Douçot]{`Simion Stoilow' Institute of Mathematics of the Romanian Academy,
	Calea Griviței 21,
	010702-Bucharest, 
	Sector 1, 
	Romania}
	\email{jeandoucot@gmail.com}

\maketitle

\begin{abstract}
It is known that any meromorphic connection on the Riemann sphere determines a finite diagram encoding its global Cartan matrix, and that it is invariant under the Fourier-Laplace transform.
If the connection is tame at finite distance and untwisted at infinity, the diagram is actually a graph, corresponding to a symmetric generalised Cartan matrix, and it was proved by Boalch/Hiroe-Yamakawa that the corresponding nonabelian Hodge moduli space contains the Nakajima quiver variety of the graph as an open subset.
In this note, we show that there exist new nonabelian Hodge diagrams that are graphs, beyond the setting of this quiver modularity theorem. The proof relies on observing that edge multiplicities in nonabelian Hodge diagrams satisfy ultrametric inequalities, which in particular gives a precise characterisation of nonabelian Hodge graphs coming from the untwisted setting.
\end{abstract}


\section{Introduction}

\subsection{General context}

The goal of this short note is to better understand various classes of graphs and diagrams (generalised graphs) naturally appearing in the context of wild nonabelian Hodge theory and Painlevé-type equations, as some kind of Dynkin-type diagrams helping us classify the new hyperkähler manifolds that occur as moduli spaces of infinite energy harmonic bundles on curves. 

It has indeed be known for some time that there are deep relations between moduli spaces of meromorphic connections on the Riemann sphere and graphs. Let us briefly recall the main facts that are relevant for our purposes here (we refer the reader to \cite{boalch2018wild, doucot2021diagrams} for more background).

\subsubsection*{Untwisted case and fission graphs}
In the case of connections with one irregular singularity at infinity, with an \emph{untwisted} irregular class, possibly together with regular singularities at finite distance, a precise formulation of such relations has been obtained by Boalch and Hiroe-Yamakawa \cite{boalch2008irregular, boalch2012simply, hiroe2014moduli}: there is an explicit way to associate to any such connection a graph, such that there is an open subset of the corresponding moduli space that is isomorphic to the Nakajima quiver variety of the graph (viewing the graph as a doubled quiver). In turn, the full moduli space can be seen as a generalised multiplicative quiver variety associated to the graph \cite{boalch2016global}. In particular, for the moduli spaces associated to the Painlevé equations with number VI, V, IV, and II, the corresponding graphs are exactly the affine Dynkin diagrams whose Weyl groups were shown by Okamoto to be groups of symmetries of the equations \cite{okamoto1992painleve}, and in the tame case this gives the star-shaped graphs of Crawley-Boevey \cite{crawley2003matrices}. We refer to the graphs that are obtained in this way as \emph{fission graphs}.

\subsubsection*{Twisted case and nonabelian Hodge diagrams/graphs}

More recently, this story was partially extended to the general case, allowing for \emph{twisted} irregular classes and multiple irregular singularities:  the works \cite{boalch2020diagrams, doucot2021diagrams} give an explicit construction associating a \emph{diagram}, that is a generalised graph, possibly with edge-loops and negative edge/loop multiplicities, to an arbitrary meromorphic connection on $\mathbb P^1$. This allows us in particular to associate a diagram to the moduli spaces corresponding to the remaining Painlevé equations (Painlevé III and I, and the so-called degenerate versions of Painlevé III). An important property of the construction is that the diagram is invariant under the Fourier-Laplace transform \cite{doucot2021diagrams}. We call \emph{nonabelian Hodge diagrams} the diagrams arising in this way. A nonabelian Hodge diagram may well have no loops, nor negative edge multiplicities, in that case we speak of a \emph{nonabelian Hodge graph}.

\subsubsection*{Fission graphs vs nonabelian Hodge graphs}
By definition, fission graphs are particular cases of nonabelian Hodge graphs. Interestingly, it can be the case that a nonabelian Hodge graph defined by a connection with a twisted irregular class at infinity is actually a fission graph, i.e. it can also be obtained from an untwisted irregular class at infinity. This happens for instance for the moduli space associated to the Painlevé II equation, as discovered in \cite{boalch2020diagrams}. The corresponding graph is the affine $A_1$ Dynkin diagram. It can be obtained as a nonabelian Hodge graph in several ways: from the standard Painlevé II Lax representation, corresponding to a rank 2 connection on $\mathbb P^1$ with one irregular singularity at infinity, with an untwisted irregular class corresponding to a pole of order 4; but also from the alternative `Flaschka-Newell' Lax representation, also of rank 2, having one irregular singularity at infinity with a twisted irregular class (of Poincaré rank 3/2), and a regular singularity at finite distance.

\subsection{Main results}

Motivated by these observations, in this work we consider the following questions:
\begin{enumerate}
\item   Can any nonabelian Hodge graph be realised as a fission graph?
\item Can any graph/diagram be realised as a nonabelian Hodge graph/diagram? 
\end{enumerate}

We show that the answer to both is negative, by finding explicit counter-examples, such as the graphs drawn on Fig. \ref{fig:counter_examples_graphs_intro} below.

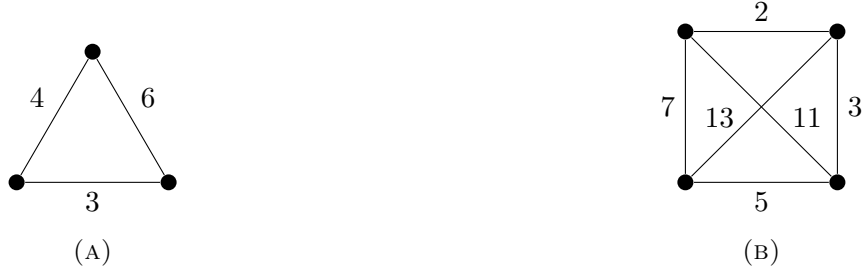
\begin{figure}[H]
\centering
\begin{subfigure}[b]{0.45\textwidth}
\centering
\begin{tikzpicture}
\tikzstyle{vertex}=[circle,fill=black,minimum size=6pt,inner sep=0pt]
\node[vertex] (A) at (0,0){} ;
\node[vertex] (B) at (2,0){} ; 
\node[vertex] (C) at (60:2){};
\draw (A) to node[midway, below] {$3$} (B);
\draw (B) to node[midway, above right] {$6$} (C);
\draw (C) to node[midway, above left] {$4$} (A);
\end{tikzpicture}
\caption{}
\label{subfig:counter_example_fission_graph_intro}
\end{subfigure}
\hfill
\begin{subfigure}[b]{0.45\textwidth}
\centering
\begin{tikzpicture}
\tikzstyle{empty}=[circle,fill=black,minimum size=0pt,inner sep=0pt]
\tikzstyle{vertex}=[circle,fill=black,minimum size=6pt,inner sep=0pt]
\node[vertex] (A) at (-1,1){};
\node[vertex] (B) at (1,1){};
\node[vertex] (C) at (1,-1){}; 
\node[vertex] (D) at (-1,-1){};
\draw (A)-- node[midway, above]{$2$} (B);
\draw (B)-- node[midway, right]{$3$} (C);
\draw (C)-- node[midway, below]{$5$} (D);
\draw (D)-- node[midway, left]{$7$} (A);
\draw (A)--  (C);
\draw (B)-- (D);
\draw  (0.6,-0.15) node {$11$};
\draw  (-0.55,-0.15) node{$13$};
\end{tikzpicture}
\caption{}
\label{subfig:counter_example_nah_graph_intro}
\end{subfigure}
\caption{(A) An example of a nonabelian Hodge graph, coming from a connection with a twisted irregular class, that is not a fission graph, see Prop. \ref{prop:counter_example_fission_graph}. (B) An example of a graph that is not a nonabelian Hodge graph, see Ex. \ref{ex:other_example_not_nah_graph}. The numbers on the edge indicate their multiplicities.}
\label{fig:counter_examples_graphs_intro}
\end{figure}

We also prove that all nonabelian Hodge diagrams that are simple graphs (i.e. edge multiplicities $0$ or $1$ and no edge-loops) are fission graphs, so are simple supernova graphs (as defined in \cite{boalch2008irregular, boalch2012simply}, having a complete $k$-partite core for some $k$): in particular the pentagon is not a nonabelian Hodge graph.

To give a more precise formulation of this, let us recall and introduce some terminology:

\begin{definition}
A \emph{diagram} is a pair $(N, B)$, where $N$ is a finite set, and $B=(B_{ij})_{i,j\in N}$ is a  symmetric square matrix with integer entries $B_{ij}=B_{ji}\in \mathbb Z$ indexed by $N$, such that $B_{ii}$ is even for any $i\in N$. 
\end{definition}

We view the elements of $N$ as vertices, for distinct vertices $i,j$, we view $B_{ij}$ as a possibly negative number of edges between $i$ and $j$, and for a vertex $i$, we view $B_{ii}/2$ as a possibly negative number of loops at $i$. In brief, diagrams generalize graphs by allowing for loops and negative multiplicities. Here, by graph we will mean the following:

\begin{definition}
A \emph{graph} is a diagram $(N, B)$ without loops, nor negative edges, i.e. such that $B_{ij}\geq 0$ for all $i,j\in N$, and $B_{ii}=0$ for any $i\in N$. 
\end{definition}

The main result of \cite{boalch2020diagrams,doucot2021diagrams} is a construction which to any algebraic connection $(E, \nabla)$ on a Zariski open subset of $\Sigma=\mathbb P^1(\mathbb C)$ associates a diagram $\Gamma(E,\nabla)$, consisting of a \emph{core diagram} $\Gamma_c(E, \nabla)$, to which are glued \emph{legs} (i.e. type $A$ Dynkin diagrams). Here we will not be interested in the legs, so unless otherwise mentioned by diagram we will mean core diagram. Furthermore, since the (core) diagram is invariant under the Fourier-Laplace transform \cite{doucot2021diagrams}, we can assume without loss of generality that the connection $(E,\nabla)$ only has a singularity at infinity, i.e. is an algebraic connection of the affine line $\Sigma^\circ:=\mathbb A^1(\mathbb C)=\mathbb P^1(\mathbb C)\smallsetminus \{\infty\}$. 

The core diagram only depends on the \emph{irregular class} $\Theta$ of $(E,\nabla)$ at infinity, so we can write $\Gamma_c(E,\nabla)=\Gamma_c(\Theta)$. The vertices of $\Gamma_c(E,\nabla)$ correspond to the \emph{Stokes circles} of the connection, encoding the exponents $q$ such that exponential terms $e^q$ appear in the horizontal sections of the connection close to infinity (see §\ref{sec:nonabelian_Hodge_diag} for a reminder of these notions).

In some cases, the diagram $\Gamma_c(\Theta)$ is a graph, in particular this is always the case if $\Theta$ is \emph{untwisted}, i.e. if no root $z^{1/r}$ of the complex coordinate $z$ appears in a Stokes circle. This motivates the following definitions of various classes of diagrams/graphs:

\begin{definition}[cf. \cite{boalch2025counting}]
Let $(N, B)$ be a diagram. 
\begin{itemize}
\item $(N, B)$ is a \emph{nonabelian Hodge diagram} if there exists an algebraic connection $(E,\nabla)$ on the affine line such that $(N,B)=\Gamma_c(E,\nabla)$.
\item $(N, B)$ is a \emph{nonabelian Hodge graph} if it is a nonabelian Hodge diagram and a graph. 
\item $(N, B)$ is a \emph{fission graph} if there exists an algebraic connection $(E,\nabla)$ on the affine line $(E,\nabla)$, with  \emph{untwisted} irregular class $\Theta$ such that $(N,B)=\Gamma_c(E,\nabla)$.
\end{itemize}
\end{definition}

Clearly, we have the inclusions:

\begin{equation}
\label{eq:inclusions_types_diagrams}
\begin{tikzpicture}[scale=0.7,baseline={([yshift=-.5ex]current bounding box.center)},vertex/.style={anchor=base,
    circle,fill=black!25,minimum size=18pt,inner sep=2pt}]
\draw (-7, 0) node {$\{ \text{fission graphs}\}$};
\draw (-2,0) node {$\{ \text{NAH graphs}\}$};
\draw (0,2) node {$\{ \text{graphs}\}$};
\draw (2,0) node {$\{ \text{diagrams}\}$};
\draw (0,-2) node {$\{ \text{NAH diagrams}\}$};
\draw (-1, 1) node[rotate=45] {$\subset$};
\draw (1, 1) node[rotate=-45] {$\subset$};
\draw (-1, -1) node[rotate=-45] {$\subset$};
\draw (1, -1) node[rotate=45] {$\subset$};
\draw (-4.5,0) node{$\subset$};
\end{tikzpicture}
\end{equation}

Our main result is that all the nontrivial inclusions here are strict:

\begin{theorem}[Prop. \ref{prop:counter_example_fission_graph} + Prop. \ref{prop:counter_example_nah_graph}]~
\label{thm:counter_examples_intro}
\begin{itemize}
\item There exist nonabelian Hodge graphs that are not fission graphs;
\item There exist graphs that are not nonabelian Hodge graphs (so in particular there exists diagrams that are not nonabelian Hodge diagrams).
\end{itemize}
\end{theorem}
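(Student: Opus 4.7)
My plan is to treat the two statements independently, both by constructing explicit counter-examples and exploiting an ultrametric-type constraint on edge multiplicities. The preliminary step is to extract from \cite{boalch2020diagrams, doucot2021diagrams} the following structural fact. Each vertex of $\Gamma_c(\Theta)$ corresponds to a Stokes circle $[q]$ carrying a ramification index $r_{[q]}\in \mathbb Z_{\geq 1}$, and the edge multiplicity $B_{ij}$ between two distinct circles is given by a formula involving the Katz invariant (slope) of $q_i-q_j$ weighted by $r_i$ and $r_j$. For three polynomial exponents (the untwisted case), the pairwise degrees obey the classical non-Archimedean ultrametric: the maximum of $\deg(q_1-q_2), \deg(q_2-q_3), \deg(q_1-q_3)$ is attained at least twice. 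For three Puiseux exponents in general, only a weaker weighted inequality holds, which can be violated by the \emph{un}weighted integers $B_{ij}$ once non-trivial ramifications enter.

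For the first statement, I would exhibit an explicit twisted irregular class $\Theta$ on $\mathbb A^1$ whose core diagram is the triangle of Fig.~\ref{subfig:counter_example_fission_graph_intro}. A natural candidate is a class built from three Stokes circles with carefully chosen Puiseux exponents of distinct slopes and ramifications, so that the formulas in \cite{doucot2021diagrams} for $B_{ij}$ produce exactly $3, 4, 6$. Since the maximum $6$ is then attained only once, this triangle is forbidden as a fission graph by the classical untwisted ultrametric, giving a nonabelian Hodge graph that is not a fission graph.

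For the second statement, I would apply the weighted ultrametric to the triangles contained in the graph of Fig.~\ref{subfig:counter_example_nah_graph_intro}. For example the triple $(A,B,C)$ has multiplicities $2, 3, 11$, three distinct and pairwise coprime values. A small case analysis should show that no assignment of ramification indices $r_A, r_B, r_C$ can make such a triple compatible with the formula for $B_{ij}$ in terms of slopes of differences of Puiseux exponents; the same triangle-counting argument also rules out the pentagon and the other graphs claimed to fail nonabelian Hodge realisability.

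The main obstacle is the explicit twisted construction in the first statement: one must choose three Puiseux exponents with suitable slopes and ramifications so that the combinatorial formula for core-edge multiplicities lands exactly on $(3,4,6)$, without introducing spurious extra vertices or edges in $\Gamma_c(\Theta)$. This is a bookkeeping puzzle in the formalism of \cite{doucot2021diagrams} rather than a deep difficulty. Once this connection is in hand, and the weighted ultrametric is precisely formulated, both halves of the theorem follow from the dichotomy between the classical ultrametric (which obstructs Fig.~\ref{subfig:counter_example_fission_graph_intro} as a fission graph) and its weighted twisted counterpart (which obstructs Fig.~\ref{subfig:counter_example_nah_graph_intro} as a nonabelian Hodge graph).
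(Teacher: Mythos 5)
Your strategy for the first bullet is essentially the paper's: establish that fission graphs are exactly the graphs whose triangles satisfy the unweighted ultrametric (two largest edges equal), and then realise the $(3,4,6)$-triangle from a twisted irregular class such as $\Theta = \cir{z^3}+\cir{z^{4/3}}+\cir{z^{3/2}}$. You defer the explicit choice of Stokes circles, but this is only bookkeeping, as you say.

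For the second bullet, however, there is a genuine gap. You claim that a single triangle $(A,B,C)$ with edge multiplicities $2,3,11$ admits no ramification data $(r_A,r_B,r_C)$ making the rescaled ultrametric hold. This is false: for any three \emph{positive} edge multiplicities one can always solve. Concretely, set $\widetilde B_{AB}=\widetilde B_{AC}$ to be the joint maximum, i.e.\ $2/(r_A r_B)=11/(r_A r_C)$, giving $r_B=2$, $r_C=11$; then the remaining constraint $\widetilde B_{BC}\le \widetilde B_{AB}$ reads $3/22\le 1/r_A$, satisfied by $r_A=1$. So no local obstruction exists on a single all-positive triangle, and your ``small case analysis'' would come up empty. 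The paper avoids this by either (i) using the simplest possible obstruction, a $3$-vertex graph with edge multiplicities $0,0,1$, where $0=\widetilde B_{ij}=\widetilde B_{ik}<\widetilde B_{jk}$ for any choice of $r$, or (ii) for Fig.~\ref{subfig:counter_example_nah_graph_intro}, running a \emph{global} argument across all four triangles of the quadrilateral: applying the rescaled ultrametric to each triangle in turn forces equalities of ratios $r_a/r_c$, $r_c/r_d$, etc.\ against pairs of distinct primes, and chaining these produces the contradiction $p_{ab}p_{cd}=p_{da}p_{bc}$. Your argument, as stated, would also fail to rule out the pentagon using a triangle of type $1,1,0$ (which \emph{does} satisfy the ultrametric); what breaks the pentagon is again a triangle with multiplicities $0,0,1$. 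To fix your proof you need either the degenerate triangle containing a zero multiplicity, or the chained four-triangle argument.
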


We also show that in the simply-laced case, that is for graphs without multiples edges, we get no new such nonabelian Hodge graphs when allowing for twisted irregular classes. 

\begin{theorem}[Prop. \ref{prop:no_new_simply_laced_graphs}]
Any simply-laced (core) nonabelian Hodge diagram is a simply-laced fission graph, i.e. a complete $k$-partite graph for some integer $k\geq 1$. 
\end{theorem}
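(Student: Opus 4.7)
The plan is to reduce the statement to a combinatorial property of graphs by exploiting the \emph{ultrametric inequality} for edge multiplicities of nonabelian Hodge diagrams advertised in the abstract. That inequality should assert that for any three pairwise distinct vertices $i,j,k$ of a (core) nonabelian Hodge diagram, one has
\[
B_{ij}\leq \max(B_{ik},B_{jk}),
\]
so that the largest of the three numbers $B_{ij},B_{ik},B_{jk}$ is attained at least twice. The reason is that the vertices are Stokes circles at infinity and the edge multiplicity $B_{ij}$ is built from the ``slope'' of the difference $q_i-q_j$ of the associated exponents; these slopes live in the value group of the non-Archimedean valuation on the Puiseux field at infinity, whence the strong triangle inequality.

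Granted the inequality, I would argue purely combinatorially. In the simply-laced case $B_{ij}\in\{0,1\}$ and $B_{ii}=0$, so the ultrametric inequality exactly forbids triangles carrying a single edge: whenever $B_{ij}=1$, every third vertex $k$ must satisfy $B_{ik}=1$ or $B_{jk}=1$. Taking the contrapositive, the relation $i\equiv j \Leftrightarrow i=j \text{ or } B_{ij}=0$ is reflexive, symmetric and transitive, hence an equivalence relation on $N$. Its equivalence classes $V_1,\dots,V_k$ partition the vertex set, and by construction two distinct vertices are adjacent iff they lie in distinct classes; this exhibits the diagram as the complete $k$-partite graph $K_{|V_1|,\dots,|V_k|}$.

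It then remains to realise every complete $k$-partite graph as a fission graph, which is a special case of the supernova construction of \cite{boalch2008irregular,boalch2012simply}: one writes down an untwisted irregular class at infinity made of $k$ families of rank-$1$ exponents of sizes $n_1,\dots,n_k$, with exponents inside a family sharing a common top-degree part and exponents in distinct families already differing at the top order, whose core diagram is precisely $K_{n_1,\dots,n_k}$. The main obstacle in this plan is the ultrametric inequality itself, whose proof requires unpacking the explicit formula for $B_{ij}$ in terms of intersection indices of Stokes circles and verifying the non-Archimedean bound on slopes; once it is in hand, the rest is elementary graph theory.
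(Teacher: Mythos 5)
Your plan follows the paper's argument in spirit, but the intermediate inequality you invoke is, as stated, false. You claim that for any nonabelian Hodge diagram one has $B_{ij}\le\max(B_{ik},B_{jk})$ for the raw edge multiplicities, on the grounds that $B_{ij}$ is ``built from the slope of $q_i-q_j$'' and valuations are non-Archimedean. This heuristic is only valid in the untwisted case, where $B_{ij}=\deg(q_i-q_j)-1$ and $r_i=1$ for every $i$. In the twisted case $B_{ij}$ is computed by Lemma \ref{lemma:formula_edge_multiplicity}, which mixes the exponents of the common part with gcds and lcms of several ramification orders, and the raw multiplicities do not obey the strong triangle inequality: the triangle in Prop.~\ref{prop:counter_example_fission_graph}, with multiplicities $3$, $4$, $6$, is a genuine nonabelian Hodge graph. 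The correct statement (Thm.~\ref{thm:necessary_condition_diagram}) is that the \emph{rescaled} multiplicities $\widetilde B_{ij}=B_{ij}/(r_ir_j)$, where $r_i=\operatorname{Ram}(I_i)$, satisfy the ultrametric condition.

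Fortunately the gap closes in the simply-laced setting, and your argument survives once you substitute the rescaled inequality: if $B_{ij}=B_{ik}=0$ and $B_{jk}=1$, then for any ramification orders $r_i,r_j,r_k\ge 1$ one gets $\widetilde B_{ij}=\widetilde B_{ik}=0<1/(r_jr_k)=\widetilde B_{jk}$, which already violates the rescaled ultrametric condition, so the $(0,0,1)$ triangle is still forbidden. From there your combinatorial step (showing $i\equiv j\Leftrightarrow i=j$ or $B_{ij}=0$ is an equivalence relation, yielding a complete $k$-partite structure) and your explicit supernova realisation of $K_{n_1,\dots,n_k}$ by an untwisted irregular class with $k$ families of degree-two exponents are both correct, and together they do a bit more work than the paper, which instead feeds the acute-isosceles triangle condition into the characterisation Theorem~\ref{thm:characterisation_fission_graphs} rather than constructing the partition and realisation by hand. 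Net: same skeleton as the paper's proof, with a genuine but repairable misstatement of the ultrametric lemma, and a more explicit final realisation step.
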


For the full nonabelian Hodge graphs including legs glued to the core, this implies that any simply-laced full nonabelian Hodge graph is a simple \emph{supernova graph} as in \cite{boalch2008irregular, boalch2012simply}.

\subsection{Ultrametric properties of nonabelian Hodge diagrams}

The idea of the proof is to find some conditions satisfied by all fission graphs and nonabelian Hodge diagrams. It is then not difficult to find explicit examples where these conditions are not satisfied.

The conditions that we obtain are of a specific form: they say that the edge multiplicities in a nonabelian Hodge diagram satisfy, after a suitable rescaling in the twisted case, the defining property of an ultrametric distance on its set of vertices. Recall that a metric $d$ on a space $X$ is an ultrametric if for any triangle $\{i,j,k\}$ in $X$, its two largest sides are of equal length, i.e. if $d(i,j)\leq d(i,k)\leq d(j,k)$, then $d(i,k)=d(j,k)$.

The appearance of ultrametrics in the context of irregular connections on curves is not surprising. Indeed, it is well-known that ultrametrics are closely related to trees, and the nonabelian Hodge diagram of an irregular connection is determined by a finer invariant, the \emph{fission tree}, which encodes the topology of the successive breakings of the structure group of the connection induced by its irregular type at a singularity \cite{boalch2025twisted}. More specifically, we will see that the reason why the rescaled edge multiplicities in nonabelian Hodge diagrams satisfy the ultrametric condition is that they only depend on the closest common ancestors of pairs of leaves in the fission tree.
 
To state the condition satisfied by nonabelian Hodge diagrams, let us introduce some more vocabulary:

\begin{definition} A decorated diagram is a pair $(\Gamma, r)$, where $\Gamma=(N, B)$ is a diagram, and $r\in \mathbb Z_{\geq 1}^N$ is the datum of an integer $r_i\geq 1$ for each vertex $i\in N$ of the diagram. 
\end{definition}

An irregular class $\Theta$ at infinity defines a decoration $r$ of its nonabelian Hodge diagram $\Gamma_c(\Theta)$ in a canonical way, by taking the ramification orders of its Stokes circles, see §\ref{sec:nonabelian_Hodge_diag}. We say that a decorated diagram obtained in this way from a connection on the affine line is a \emph{decorated nonabelian Hodge diagram}.

To a decorated diagram we can associate a rescaled (generalised) diagram, with rational edge/loop multiplicities:

\begin{definition}
Let $(\Gamma, r)$, with $\Gamma=(N,B)$, be a decorated diagram. The corresponding rescaled diagram is the pair $\widetilde\Gamma=(N, \widetilde B)$, where $\widetilde B=(\widetilde B_{ij})_{i,j\in N}$ is given by
\begin{equation}
\widetilde B_{ij}:=\left\lbrace 
\begin{array}{cc}
\frac{B_{ij}}{r_i r_j} & \text{ if } i\neq j,\\
\frac{B_{ii}-1}{r_i^2} & \text{otherwise.}
\end{array}
\right.
\end{equation}
\end{definition}

\begin{theorem}[Thm. \ref{thm:necessary_condition_diagram}]
\label{thm:rescaled_nah_diagram_necessary_condition_intro}
Let $(\Gamma,r)$ be a decorated nonabelian Hodge diagram, with $\Gamma=(N,B)$. Then the rescaled diagram $\widetilde \Gamma=(N, \widetilde B)$ satisfies the following properties:
\begin{itemize}
\item  Ultrametric condition for edge multiplicities: for $i,j,k\in N$ pairwise distinct, if $\widetilde B_{ij}\leq \widetilde B_{ik}\leq \widetilde B_{jk}$, then $\widetilde B_{ik}=\widetilde B_{jk}$.
\item Inequality for loops and adjacent edges: for $i,j\in N$ with $i\neq j$, $\widetilde B_{ii}\leq \widetilde B_{ij}$.
\end{itemize}
\end{theorem}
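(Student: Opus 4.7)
The plan is to reduce the rescaled multiplicities $\widetilde B_{ij}$ to data at the closest common ancestors of leaves in the fission tree $T$ of the irregular class at infinity, and then invoke elementary properties of rooted trees. The key observation, hinted at in the introduction, is that $\widetilde B_{ij}$ depends only on the CCA $v_{ij}$ of $\Omega_i$ and $\Omega_j$ in $T$, together with the local Galois data at this node.

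First I would recall from \cite{doucot2021diagrams, boalch2020diagrams} the formula expressing $B_{ij}$ as a sum, over pairs $(a,b)$ of Galois representatives of the branches of $\Omega_i$ and $\Omega_j$ in a common ramified cover, of the pole order at infinity of the differences $q_i^{(a)}-q_j^{(b)}$, plus a $+1$ identity-pair correction on the diagonal. Each such pole order equals the slope at which the two branches' Puiseux expansions first disagree. I would then invoke the Galois equivariance of the fission-tree construction to show that the average $\widetilde B_{ij} = B_{ij}/(r_i r_j)$ depends only on the Galois distribution of the branches of $\Omega_i, \Omega_j$ among the ramified children of $v_{ij}$: since $\Omega_i$ and $\Omega_j$ are separated precisely at the children of $v_{ij}$, no ramified pair $(a,b)$ can have a common ancestor deeper than $v_{ij}$, and hence any further refinement of $\Omega_i$ or $\Omega_j$ below $v_{ij}$ is invisible to $\widetilde B_{ij}$. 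Analogous bookkeeping for $i = j$ shows that the $-1$ correction in $\widetilde B_{ii}$ exactly absorbs the identity-pair contribution, leaving $\widetilde B_{ii}$ as a function of the ramified Galois orbit of $\Omega_i$ rooted at the deepest node $v_{ii}$ on its root-to-leaf path.

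Given this tree interpretation, both inequalities follow from standard facts about rooted trees. For pairwise distinct leaves $i, j, k$, two of the CCAs $v_{ij}, v_{ik}, v_{jk}$ coincide with the CCA of the triple while the third is a strict descendant. If $v_{ik} = v_{jk}$, then $\Omega_i$ and $\Omega_j$ lie in the same child subtree of this common CCA while $\Omega_k$ is in another; the Galois distributions at $v_{ik}$ are therefore identical for the two pairs, whence $\widetilde B_{ik} = \widetilde B_{jk}$ by the reduction of the previous paragraph. The third pair has strictly deeper CCA with smaller associated slope, giving $\widetilde B_{ij} \le \widetilde B_{ik}$ — exactly the ultrametric condition. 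For the loop-edge inequality, $v_{ii}$ lies below $v_{ij}$ on the root-to-leaf path of $\Omega_i$, so slope monotonicity yields $\widetilde B_{ii} \le \widetilde B_{ij}$. The main technical obstacle is the reduction step in the middle paragraph: one has to carefully set up the Galois-averaging argument so that $\widetilde B_{ij}$ is visibly a function of CCA data only and verify the $\pm 1$ bookkeeping on the diagonal; once this is done, the ultrametric and loop inequalities are immediate combinatorial consequences of the tree structure.
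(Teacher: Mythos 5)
Your proposal correctly identifies the two structural pillars of the proof — that $\widetilde B_{IJ}$ is determined by data in the fission tree at and above the closest common ancestor $v_{ij}$, and that the ultrametric statement should then follow from the combinatorics of the three pairwise ancestors — and this is indeed the paper's strategy. However, there is a genuine gap in the last step: you assert that once the reduction to CCA data is done, the inequalities are ``immediate combinatorial consequences of the tree structure'' and follow from ``standard facts about rooted trees'' or ``slope monotonicity.'' This is not so. The fact that $\widetilde B_{IJ}$ depends only on the tree above $v_{ij}$ does not by itself imply that it \emph{decreases} when $v_{ij}$ moves strictly deeper, and that monotonicity is precisely where the real work lies. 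The paper proves it by first rewriting the rescaled multiplicity in the form
\[
\widetilde B_{I,J}=k_0\Bigl(1-\tfrac{1}{d_0}\Bigr)+ \sum_{l=1}^t k_l\Bigl(\tfrac{1}{\mathrm{lcm}(d_0,\dots, d_{l-1})}-\tfrac{1}{\mathrm{lcm}(d_0,\dots, d_{l})}\Bigr)+\tfrac{f_{I,J}}{\mathrm{lcm}(d_0,\dots, d_{t})}-1,
\]
and then, in the case $v_{ij}$ strictly below $v_{ik}=v_{jk}$, bounding the extra terms $\sum_{l=t+1}^u(\cdots)+f_{ij}/\mathrm{lcm}(d_0,\dots,d_u)$ by the telescoping estimate $\leq f_{ik}/\mathrm{lcm}(d_0,\dots,d_t)$; the corresponding estimate for loops is similar. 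Without something of this kind your proof is incomplete. Two further remarks in the same vein: averaging ultrametrics over Galois orbits does not in general produce an ultrametric (the equality $\widetilde B_{ik}=\widetilde B_{jk}$ does follow from averaging \emph{identical} data, but the third inequality does not follow from averaging alone), so the Galois-averaging framing does not dodge the monotonicity computation; and your case analysis does not separately address the degenerate case $v_{ij}=v_{ik}=v_{jk}$, where all three rescaled multiplicities coincide — the paper treats it as a distinct case.
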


\begin{corollary}
Let $\Gamma=(N, B)$ be a nonabelian Hodge diagram. Then there exists $r\in \mathbb Z_{\geq 1}^N$ such that the rescaled diagram $\widetilde \Gamma$ associated to $(\Gamma, r)$ satisfies the conditions of Thm. \ref{thm:rescaled_nah_diagram_necessary_condition_intro}.
\end{corollary}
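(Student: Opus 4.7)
The plan is to deduce this directly from Theorem \ref{thm:rescaled_nah_diagram_necessary_condition_intro}, by observing that the theorem's hypothesis is automatically satisfied as soon as $\Gamma$ is a nonabelian Hodge diagram, once we invoke the canonical decoration produced by the connection realising $\Gamma$.

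First, I would unpack the definition: by hypothesis there exists an algebraic connection $(E,\nabla)$ on $\mathbb A^1(\mathbb C)$ with $\Gamma_c(E,\nabla)=(N,B)$. Its irregular class $\Theta$ at infinity has a finite collection of Stokes circles, which by construction are in bijection with the vertices $N$. To each vertex $i\in N$ we associate the ramification order $r_i\in \mathbb Z_{\geq 1}$ of the corresponding Stokes circle, as recalled in §\ref{sec:nonabelian_Hodge_diag}. This produces a canonical decoration $r=(r_i)_{i\in N}\in \mathbb Z_{\geq 1}^N$, and by definition the pair $(\Gamma,r)$ is a decorated nonabelian Hodge diagram.

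Second, I would apply Theorem \ref{thm:rescaled_nah_diagram_necessary_condition_intro} to $(\Gamma,r)$: this immediately yields that the associated rescaled diagram $\widetilde\Gamma$ satisfies both the ultrametric condition on edge multiplicities and the inequality between loops and adjacent edges. This $r$ is therefore the element of $\mathbb Z_{\geq 1}^N$ whose existence is asserted, which concludes the proof.

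There is no genuine obstacle here: the only thing to verify is that the statement of Theorem \ref{thm:rescaled_nah_diagram_necessary_condition_intro} is formulated starting from a diagram equipped with its ramification decoration, and that every nonabelian Hodge diagram canonically carries such a decoration (possibly non-unique if several non-isomorphic irregular classes realise the same $\Gamma_c$, but any such choice works). Hence the corollary is really just a rephrasing of Theorem \ref{thm:rescaled_nah_diagram_necessary_condition_intro} in which the decoration is existentially quantified rather than given as part of the data.
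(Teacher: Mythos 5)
Your argument is correct and is exactly the intended (and only natural) deduction: realise $\Gamma$ by a connection, take $r$ to be the ramification orders of the corresponding Stokes circles so that $(\Gamma,r)$ is a decorated nonabelian Hodge diagram, and apply Theorem \ref{thm:rescaled_nah_diagram_necessary_condition_intro}. The paper leaves this corollary without an explicit proof precisely because it is this immediate rephrasing.
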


For fission graphs, the ultrametric conditions actually give a characterisation:

\begin{theorem}[Thm. \ref{thm:characterisation_fission_graphs}]
\label{thm:characterisation_fission_graphs_triangles_intro}
Let $\Gamma=(N,B)$ be a graph. Then it is a fission graph if and only if all its edge multiplicities satisfy the ultrametric condition: for $i,j,k\in N$ pairwise distinct, if $B_{ij}\leq B_{ik}\leq B_{jk}$, then $B_{ik}=B_{jk}$.
\end{theorem}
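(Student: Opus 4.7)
The plan is to prove both implications separately, with the forward direction being an immediate corollary of Theorem~\ref{thm:rescaled_nah_diagram_necessary_condition_intro} and the reverse direction proceeding by an explicit construction of an untwisted irregular class from the ultrametric data. For the forward direction, suppose $\Gamma=(N,B)$ is a fission graph, so $\Gamma=\Gamma_c(\Theta)$ for some untwisted irregular class $\Theta$ at infinity. Untwistedness forces all ramification orders $r_i$ to equal $1$, so the rescaled multiplicities $\widetilde{B}_{ij}=B_{ij}/(r_i r_j)$ coincide with $B_{ij}$ for $i\neq j$. Theorem~\ref{thm:rescaled_nah_diagram_necessary_condition_intro} then yields the ultrametric condition on $B$ directly.

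For the reverse direction, given $(N,B)$ satisfying the ultrametric condition, I would explicitly construct an untwisted irregular class realising $\Gamma$. The condition that the two largest sides of each triangle are equal is precisely the strong triangle inequality $B_{ij}\le \max(B_{ik},B_{jk})$, so $B$ is an ultrametric on $N$ (setting $B_{ii}=0$). By the standard correspondence between finite ultrametrics and rooted trees, this yields a rooted tree $T$ whose leaves are indexed by $N$, whose internal nodes $v$ carry heights $r_v$ ranging over the distinct positive values of $B$, and such that the most recent common ancestor $v^*(i,j)$ of any two leaves $i\neq j$ satisfies $r_{v^*(i,j)}=B_{ij}$.

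I would then assign to every internal node $v$ and every child $c$ of $v$ a complex number $\alpha_{v,c}$, with the $\alpha_{v,c}$ for distinct children of the same $v$ pairwise distinct, and set
\[
q_i(z)=\sum_{v}\alpha_{v,c_v(i)}\,z^{r_v},
\]
the sum ranging over the internal ancestors $v$ of $i$ in $T$, where $c_v(i)$ denotes the child of $v$ on the path to $i$. A direct computation then shows $\deg(q_i-q_j)=B_{ij}$: contributions from strict ancestors of $v^*(i,j)$ cancel (the same child is chosen on both sides), the contribution at $v^*(i,j)$ gives a nonzero coefficient of $z^{B_{ij}}$, and all other terms come from proper descendants of $v^*(i,j)$ on either side and hence have degree strictly less than $B_{ij}$. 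Taking the irregular class $\Theta=\sum_{i\in N}\langle q_i\rangle$ with all multiplicities equal to $1$ yields an untwisted class (each $q_i\in\mathbb{C}[z]$) whose Stokes circles are in bijection with $N$, and whose core diagram recovers $(N,B)$ via the untwisted edge-multiplicity formula $B_{ij}=\deg(q_i-q_j)$.

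The main subtlety I anticipate is the degenerate case of a disconnected graph, where some $B_{ij}=0$: this is handled either by running the construction on each connected component of $\Gamma$ separately, or by adding distinct constant terms to the $q_i$'s coming from different components, so that the Stokes circles remain distinct while the degree of $q_i-q_j$ across components stays $0$. Once the tree/polynomial construction is carefully verified, the converse is complete; the rest is essentially bookkeeping on the ultrametric tree.
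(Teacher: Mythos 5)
Your overall strategy matches the paper's in spirit: for the forward direction you establish the ultrametric condition from the diagram formulae, and for the reverse direction you build a tree from the ultrametric and read off polynomials whose pairwise degree differences recover $B$. The paper does the forward direction more elementarily (directly from $B_{ij}=\deg(q_i-q_j)-1$ and the degree-of-a-sum argument, rather than invoking the heavier rescaled-multiplicity Theorem~\ref{thm:rescaled_nah_diagram_necessary_condition_intro}), and does the reverse by recursively gluing a fission forest via equivalence relations at each height rather than via the abstract ultrametric--tree dictionary; but these are the same construction in different clothing.

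There is, however, a concrete off-by-one error in your reverse direction that propagates into the mishandled degenerate case. The untwisted edge-multiplicity formula is $B_{ij}=\deg(q_i-q_j)-1$, not $\deg(q_i-q_j)$ (see the remark after Lemma~\ref{lemma:formula_edge_multiplicity}). Your construction $q_i(z)=\sum_{v}\alpha_{v,c_v(i)}z^{r_v}$ with $r_{v^*(i,j)}=B_{ij}$ gives $\deg(q_i-q_j)=B_{ij}$, hence would realise a graph with all multiplicities shifted down by one. You need to put the lowest common ancestor at height $B_{ij}+1$, i.e.\ use $z^{r_v+1}$ or equivalently run the tree construction on $B+1$; this is exactly the ``shifted by~$1$'' step in the paper's proof. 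Once corrected, the $B_{ij}=0$ case is no longer degenerate: it just means $\deg(q_i-q_j)=1$, so the $q_i$'s differ in their linear coefficient. Your proposed fix of ``adding distinct constant terms'' does not work, because exponential factors at infinity are Puiseux polynomials with no constant term --- adding a constant leaves the Stokes circle unchanged --- and ``the degree of $q_i-q_j$ across components stays $0$'' would force $q_i=q_j$, i.e.\ a collapsed Stokes circle, not multiplicity $0$. With the shift by one, the disconnected case needs no special treatment at all.
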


Using these conditions, it is not difficult to find counter-examples answering our main questions, such as those drawn on Fig. \ref{fig:counter_examples_graphs_intro}.

Interestingly, notice that Thm. \ref{thm:rescaled_nah_diagram_necessary_condition_intro} can be seen as a variant of a known result in the theory of singularities of plane algebraic curves, a theorem of Ploski \cite{ploski1985remarque} showing the existence of an ultrametric on the set of all plane branches, involving intersection numbers of pairs of branches. This is one further instance of many situations where objects associated to plane algebraic curves (Newton polygons, Puiseux series, torus knots, etc...) naturally appear in the study of irregular connections. The link between nonabelian Hodge graphs and Ploski's theorem can be sketched as follows:  via the wild nonabelian Hodge correspondence \cite{sabbah1999harmonic,biquard2004wild}, an irregular connection on the affine line determines a meromorphic Higgs bundle, which has a spectral curve. The Stokes circles of the connection are related to the branches of the spectral curve at infinity. In turn, the edge multiplicities between Stokes circles are related to intersection numbers of the corresponding branches \cite{hiroe2017ramified}. Since the ultrametric found by Ploski involves these intersection numbers, it is not surprising that this translates into a condition satisfied by nonabelian Hodge diagrams. While we prove \ref{thm:rescaled_nah_diagram_necessary_condition_intro} in a direct way, our arguments are quite similar to those used in the proof of Ploski's theorem given by Popescu-Pampu in his very nice discussion of the topic \cite{popescu2020ultrametrics}.

\subsection{Contents} The organization of the article is as follows. In section \ref{sec:nonabelian_Hodge_diag} we recall a few facts on nonabelian Hodge graphs and fission trees. In section \ref{sec:untwisted_case} we establish the characterisation of a fission graph and use it to find an example of nonabelian Hodge graph. Finally in section \ref{sec:general_case} we prove Thm. \ref{thm:rescaled_nah_diagram_necessary_condition_intro} and use it to find an example of a graph that is not a nonabelian Hodge graph, and show that all simply-laced nonabelian Hodge graphs are fission graphs. 

\subsection*{Acknowledgements} I thank Philip Boalch for suggesting to me the question that led to this note, and for many useful discussions. I am funded by the PNRR Grant CF 44/14.11.2022, ``Cohomological Hall Algebras of Smooth Surfaces and Applications''. 

\section{Nonabelian Hodge diagrams}
\label{sec:nonabelian_Hodge_diag}

Let us recall a few facts about (core) nonabelian Hodge diagrams and fission trees associated to irregular connections on $\mathbb P^1$. We refer the reader to \cite{doucot2021diagrams, boalch2025twisted} for more details. As mentioned in the introduction, it follows from the invariance of the nonabelian Hodge diagrams under the Fourier-Laplace transform that we can restrict without loss of generality to the diagrams of algebraic connections on the affine line $\Sigma^\circ=\mathbb A^1(\mathbb C)=\mathbb P^1(\mathbb C)\smallsetminus \{\infty\}$. We denote by $z$ the standard complex coordinate on $\mathbb A^1(\mathbb C)$.

\subsection{General structure}

Let us first recall the notion of \emph{Stokes circles}. Let $\partial$ denote the circle of real oriented directions at $\infty$. The exponential local system $\pi:\mathcal I\to\partial$ is a covering space whose local sections on sectors correspond to germs of functions of the form
\begin{equation}
\label{eq:exponential_factor}
q=\sum_{j=1}^s a_i x^{-j/r},
\end{equation}
with $r\geq 1$, $s$ an integer, and $a_i\in \mathbb C$ for $i=1,\dots, s$, and $x:=1/z$ is the standard local coordinate on $\mathbb P^1$ vanishing at $\infty$.

By definition, a Stokes circle is a connected component of $\mathcal I$ (the connected components are homeomorphic to circles, hence the name). Concretely, a Stokes circle $I$ amounts to the datum of a Galois orbit $\cir{q}$ of some Puiseux polynomial $q$ as in \eqref{eq:exponential_factor}.

If $(E,\nabla)$ is an algebraic connection on the affine line, it follows from the well-known Turritin-Levelt theorem that $(E,\nabla)$ has a canonically defined \emph{irregular class} $\Theta$ at infinity, that is a finite multiset
\[
\Theta=\sum_{i\in N} n_i I_i,
\]
with $n_i\geq 1$ integer multiplicities, of pairwise distinct Stokes circles $I_i$.

The core nonabelian Hodge diagram $\Gamma_c(E,\nabla)=(N,B)$ associated to $(E,\nabla)$ only depends on $\Theta$. It has the following general structure:
\begin{itemize}
\item The set of vertices $N$ is identified with the set of distinct Stokes circles of $\Theta$; 
\item For $i\in N$, the loop multiplicity $B_{ii}$ is determined by the Stokes circle $I_i$;
\item For $i,j\in N$ with $i\neq j$, the edge multiplicity $B_{ij}=B_{ji}$ is determined by the pair of Stokes circles $I_i,I_j$.
\end{itemize}

In other words, any Stokes circle (at infinity) $I$ determines a loop multiplicity $B_{I,I}$ and each pair of distinct Stokes circles $(I,J)$ determines an edge multiplicity $B_{I,J}=B_{J,I}$. 

Let us now recall the explicit expressions for these multiplicities.

\subsection{Loop multiplicities}

Let $I=\cir{q}$ be a nonzero Stokes circle. There is a unique way to write the exponential factor $q$ as in \eqref{eq:exponential_factor}, with $r$ the smallest integer $r\geq 1$ such that $q$ can be written in this way, and $s$ such that $a_s\neq 0$.

The integer $r$ is the \emph{ramification order} $\mathrm{Ram}(I)=\mathrm{Ram}(q):=r$ of $q$ and $I$ (this does not depend on the Galois representative $q$). $I$ is said to be untwisted (or unramified) if $r=1$, and twisted if $r>1$. 

The integer $s$, that is the degree of $q$ as a polynomial in $z^{1/r}$ is the \emph{irregularity} $\mathrm{Irr}(I)=\mathrm{Irr}(q)$ of $q$ and $I$. 

The quotient $\frac{s}{r}=:\mathrm{Slope}(I)=\mathrm{Slope}(q)$ is the \emph{slope} of $q$ and $I$.

For $j=1, \dots, s$, we say that $\frac{j}{r}\in \mathbb Q_{>0}$ is an exponent of $I$ if $a_j\neq 0$. We denote by $E(I)\subset \frac{1}{r}\mathbb Z_{>0}$ the set of all exponents of $I$, and write them in decreasing order
\[
E(I)=\{k_0>\dots > k_p\}.
\]

We can write each exponent either as reduced fraction, or with denominator $r$:
\[
k_j=\frac{n_j}{d_j}=\frac{m_j}{r},
\]
with $n_i$ and $d_i$ coprime. This defines positive integers $n_j,d_j, m_j$ for $j=1, \dots, p$. In particular $k_0=\mathrm{Slope}(I)$ and $s=m_0$. We have $r=\lcm(d_0, \dots, d_p)$.

\begin{lemma}[{\cite[Lemma 4.6]{doucot2021diagrams}}]
\label{lemma:formula_loop_multiplicities}
With these notations, the loop multiplicity $B_{I,I}$ associated to the Stokes circle $I$ is given by
\[
B_{I,I}=m_0(r-\mathrm{gcd}(r, m_0))+ \sum_{j=1}^p  m_j\left(\mathrm{gcd}(r, m_0, \dots, m_{j-1})-\mathrm{gcd}(r, m_0,\dots,m_j)\right)-r^2+1
\]
\end{lemma}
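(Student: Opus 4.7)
The strategy is to realise $B_{I,I}$ as the irregularity of the endomorphism bundle of a basic local model, minus a tame correction, and then reduce the computation to a counting problem on roots of unity. Let $E_I$ denote the rank-$r$ basic local connection at infinity whose formal exponents are the $r$ Galois conjugates $q_\zeta(z) := q(\zeta z^{1/r})$ of $q$ (indexed by $\zeta^r = 1$). Unpacking the definition of the loop multiplicity in \cite{doucot2021diagrams}, one obtains the identity
\[
B_{I,I} \;=\; \Irr_\infty(\End E_I) \,-\, r^2 + 1,
\]
so the bulk of the work is to compute $\Irr_\infty(\End E_I)$.

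Since $\End E_I$ has exponents $q_\zeta - q_{\zeta'}$ indexed by ordered pairs of $r$-th roots of unity, its irregularity equals $\sum_{\zeta \neq \zeta'} \Slope(q_\zeta - q_{\zeta'})$. Setting $\eta := \zeta/\zeta'$ and expanding,
\[
q_\zeta(z) - q_{\zeta'}(z) \;=\; \sum_{j=0}^p a_j\,\zeta'^{m_j}(\eta^{m_j}-1)\,z^{m_j/r},
\]
so $\Slope(q_\zeta - q_{\zeta'}) = m_{j^\star(\eta)}/r$, where $j^\star(\eta)$ is the least index $j$ with $\eta^{m_j} \neq 1$. In particular the slope depends only on $\eta$, and each value $\eta \neq 1$ arises from exactly $r$ ordered pairs $(\zeta, \zeta')$, giving
\[
\Irr_\infty(\End E_I) \;=\; \sum_{\substack{\eta\neq 1\\ \eta^r=1}} m_{j^\star(\eta)}.
\]

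The last step is to sort the $r$-th roots of unity according to $j^\star$. Setting $d_{-1} := r$ and $d_j := \gcd(r, m_0, \ldots, m_j)$, the condition $\eta^{m_0} = \cdots = \eta^{m_{j-1}} = 1$ is equivalent to $\mathrm{ord}(\eta) \mid d_{j-1}$. Hence the $\eta$ with $j^\star(\eta) = j$ are precisely the $d_{j-1}$-th roots of unity which are not $d_j$-th roots, contributing $d_{j-1} - d_j$ elements. Minimality of $r$ forces $d_p = 1$, so every nontrivial $\eta$ is accounted for. Assembling and subtracting $r^2 - 1$ yields the stated expression, with the $j=0$ term separated out as $m_0(r - \gcd(r,m_0))$.

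The only real obstacle is the opening step: translating the Stokes-theoretic definition of $B_{I,I}$ from \cite{doucot2021diagrams} into the clean identity $B_{I,I} = \Irr_\infty(\End E_I) - r^2 + 1$. Once that identity is in hand, everything reduces to careful bookkeeping of Galois orbits and nested gcds, which is essentially the computation carried out in the cited paper.
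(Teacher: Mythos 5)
The paper does not prove this lemma at all: it is a verbatim citation of \cite[Lemma~4.6]{doucot2021diagrams}, so there is no ``paper's own proof'' to compare against. Your write-up is therefore a re-derivation of an outsourced fact, which is more than the paper itself provides.

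Your computation is correct and, as far as the algebra goes, essentially complete. The reduction $\Irr_\infty(\End E_I)=\sum_{\eta\neq1,\,\eta^r=1}m_{j^\star(\eta)}$ is right: each $\eta\neq1$ accounts for $r$ ordered pairs $(\zeta,\zeta')$, each contributing downstairs slope $m_{j^\star(\eta)}/r$, and the factors of $r$ cancel. The stratification of $\mu_r$ by $j^\star$ via the nested $\gcd$'s $d_{-1}=r\supset d_0\supset\cdots\supset d_p$ is correct, and $d_p=1$ does indeed follow from minimality of $r$ (otherwise all $k_j$ would lie in $\tfrac{1}{r/g}\mathbb Z$ for $g=\gcd(r,m_0,\dots,m_p)>1$). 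Assembling gives $\Irr_\infty(\End E_I)=m_0(r-d_0)+\sum_{j\geq1}m_j(d_{j-1}-d_j)$, and with the opening identity this is exactly the stated formula. I checked it against all the worked examples in §2.3 ($B_{I,I}=(r-1)(s-r-1)$ for a monomial circle; $B=38,10,2$ for the three circles in Example~2.1) and it agrees.

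The one genuine gap is the one you flag yourself: the identity $B_{I,I}=\Irr_\infty(\End E_I)-r^2+1$ is asserted to follow from ``unpacking the definition'' in \cite{doucot2021diagrams}, but you give no argument for it. That identity is where the entire Stokes-theoretic content lives (it encodes that the core diagonal of the Cartan/adjacency matrix is the self-block Stokes count minus the tame block size), and without at least a sketch the proof is circular: you would be proving the cited lemma by appealing to the same cited paper. That said, the identity is correct, and the route you take --- compute $\Irr$ of the endomorphism local system by sorting Galois conjugates according to the first index where they separate --- is precisely the computation carried out in the source. So this is the right proof, with one step deferred.
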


\begin{remark} In particular, the exponent $k_j$ contributes to $B_{I,I}$ only when 
\[
\mathrm{gcd}(r, m_0,\dots,m_j)<\mathrm{gcd}(r, m_0,\dots,m_{j-1}).
\]
In that case, we say that $k_j$ is a \emph{level} of $I$. We denote by $L(I)\subset E(I)$ the set of levels of $I$. Notice that a Stokes circle $I$ is untwisted if and only if $L(I)\neq\emptyset$.

The levels of a Stokes circle essentially correspond to the notion of Puiseux characteristic of a plane branch in the theory of  singularities of plane algebraic curves. However, the notion of level is more general since it also makes sense for irregular connections on principal $G$-bundles (see \cite{doucot2025twisted} for an explicit description of levels for classical groups).
\end{remark}

\begin{remark}
In simple particular cases, the formula for the loop multiplicity simplifies:
\begin{itemize}
\item If $I=\cir{z^{s/r}}$ with $s,r$ coprime, then
\begin{equation}
B_{I,I}=(r-1)(s-r-1).
\end{equation}
\item If $I$ is untwisted, then $B_{I,I}=0$.
\end{itemize}
\end{remark}

Finally, for the tame circle $I=\cir{0}$, we have $\mathrm{Irr}(I)=0$, $\mathrm{Ram}(I)=1$, and $B_{I,I}=0$.

\subsection{Edge multiplicities}

Now consider two distinct Stokes circles $I=\cir{q}$, $J=\cir{q'}$.

We define a decomposition of $q$ and $q'$
\begin{equation}
q=q_c+q_d, \qquad q'=q'_c+q'_d,
\end{equation} into \emph{common parts} $q_c, q'_c$ such that 
$\cir{q_c}=\cir{q'_c}$, and \emph{different parts} $q_d\neq q'_d$ such that $\cir{q_d}\neq\cir{q'_d}$ in the following way: if $q=\sum_{j=1}^s a_jz^{j/r}$ is an exponential factor as in \eqref{eq:exponential_factor}, and $k\in \mathbb Q_{>0}$, we define the truncation $\tau_k(q)$ of $q$ as
\[
\tau_k(q):=\sum_{j/r\geq k} a_j z^{j/r},
\]
i.e. we remove all its monomials with exponent $<k$. The common parts are defined as 
\[
q_c:=\tau_k(q), \qquad q'_c:=\tau_k(q'), 
\]
where
\[
k=\min\left\{l\in \frac{1}{\mathrm{lcm}(r, r')}\mathbb Z_{>0}\;\middle \vert\; \cir{\tau_l(q)}=\cir{\tau_l(q')}\right\}.
\]

We say that the Stokes circle $I_c=J_c:=\cir{q_c}=\cir{q'_c}$ is the common part of $I$ and $J$ (this is well-defined since it depends only on $I$ and $J$). Note that we can have $I_c=\cir{0}$: this happens when the leading terms of $q$ and $q'$ have different Galois orbits. 

We also define the \emph{fission exponent} of $I, J$ as the rational number
\begin{equation}
f_{I, J}:=\max(\mathrm{Slope}(q_d), \mathrm{Slope}(q'_d))\in \mathbb Q_{>0}.
\end{equation}

If $I_c\neq \cir{0}$, let us write the set of exponents of the common part in decreasing order
\[
E(I_c)=\{k_0>\dots > k_t\}.
\]
We can write each exponent $k_j$, $j=0, \dots, t$ either as a reduced fraction, or with denominator $r$ or $r'$:
\[
k_j=\frac{n_j}{d_j}=\frac{m_j}{r}=\frac{m'_j}{r'},
\]
with $n_i$ and $d_i$ coprime. This defines positive integers $n_j,d_j, m_j, m'_j$ for $j=1, \dots, t$. In particular $k_0=\mathrm{Slope}(I_c)$ and $s=m_0$. The ramification order of the common part is $\Ram(I_c)=\lcm(d_0, \dots, d_t)$, and it divides both $r$ and $r'$.

\begin{lemma}[{\cite[Lemma 4.5]{doucot2021diagrams}}]
\label{lemma:formula_edge_multiplicity}
Assume $\mathrm{Slope}(q_d)\geq \mathrm{Slope}(q'_d)$, i.e. $f_{I,J}=\mathrm{Slope}(q_d)$. Then, with the previous notations and writing $f_{I,J}=\frac{s_d}{r}$, the edge multiplicity $B_{I,J}$ is given by
\begin{equation}
\begin{array}{lll}
B_{I,J} &=& m_0(r'-\mathrm{gcd}(r',m_0))+ \sum_{j=1}^t  m_j\left(\mathrm{gcd}(r', m'_0, \dots, m'_{j-1})-\mathrm{gcd}(r', m'_0,\dots,m'_j)\right)\\
& & + \; s_{d}\,\mathrm{gcd}(r', m'_0,\dots,m'_t)-rr'
\end{array}
\end{equation}
(with the convention that the first line of the right-hand side is empty if $I_c=J_c=0$).
\end{lemma}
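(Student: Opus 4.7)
The plan is to derive the formula by reducing $B_{I,J}$ to a computation of irregularity at infinity, and then evaluating that irregularity by partitioning the pairs of Galois conjugates of $q$ and $q'$ according to the largest common-part exponent at which their truncations first diverge. This parallels the approach behind the loop formula of Lemma~\ref{lemma:formula_loop_multiplicities}.

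First, I would express the edge multiplicity as
\[
B_{I,J} = \Irr(\Hom(\mathcal{I}_q, \mathcal{I}_{q'})) - rr',
\]
where $\mathcal{I}_q$ denotes the rank-$r$ exponential local system attached to $I = \cir{q}$: the total irregularity at infinity of $\Hom(\mathcal{I}_q, \mathcal{I}_{q'})$ minus its rank (this being the edge analog of the loop identity $B_{I,I} = \Irr(\mathrm{End}(\mathcal{I}_q)) - r^2 + 1$ implicit in Lemma~\ref{lemma:formula_loop_multiplicities}). Passing to the totally ramified cover $\pi_R \colon Y \mapsto Y^R = x$ with $R = \lcm(r, r')$, the pullback $\pi_R^*\Hom(\mathcal{I}_q, \mathcal{I}_{q'})$ splits as $\bigoplus e^{\tilde q' - \tilde q}$, indexed by the $rr'$ pairs of Galois conjugates, so that
\[
\Irr(\Hom(\mathcal{I}_q, \mathcal{I}_{q'})) = \frac{1}{R} \sum_{(\tilde q, \tilde q')} \deg_{Y^{-1}}(\tilde q' - \tilde q).
\]

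Next, I would partition the $rr'$ pairs of conjugates according to the largest common-part exponent $k_j$ at which the truncations of $\tilde q$ and $\tilde q'$ first disagree, lumping together those pairs that agree on all of the common part. The crucial input is the elementary fact that the stabilizer of a monomial $Y^{-m}$ under $\mu_N$ has order $\gcd(N,m)$; iterating this across the successive exponents shows that the number of conjugates of $q'$ agreeing with a fixed $\tilde q$ up to level $k_j$ equals $\gcd(r', m'_0, \ldots, m'_j)$. The pairs diverging precisely at $k_j$ therefore number $r\bigl(\gcd(r', m'_0, \ldots, m'_{j-1}) - \gcd(r', m'_0, \ldots, m'_j)\bigr)$, each contributing a difference of slope $k_j = m_j/r$; after the appropriate $R$-rescaling this produces the $j$-th term $m_j\bigl(\gcd(r', m'_0, \ldots, m'_{j-1}) - \gcd(r', m'_0, \ldots, m'_j)\bigr)$ of the formula (the $j = 0$ case recovering the initial term with the empty gcd read as $r'$). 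The pairs agreeing on the entire common part, which number $r \cdot \gcd(r', m'_0, \ldots, m'_t)$, contribute with slope $f_{I,J} = s_d/r$, yielding the penultimate term $s_d \cdot \gcd(r', m'_0, \ldots, m'_t)$. Summing all contributions and subtracting $rr'$ produces the stated expression, and the convention for $I_c = \cir{0}$ corresponds to all pairs falling into the last bucket.

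The main obstacle is the Galois-theoretic bookkeeping across the two different ramifications $r$ and $r'$. Once the stabilizer count has been correctly iterated through the common-part exponents, the enumeration of pairs is routine; the hypothesis $\Slope(q_d) \geq \Slope(q'_d)$ enters only at the end, to identify the slope of the final bucket as $s_d/r$ rather than $s_d/r'$, and the symmetry of the result under $I \leftrightarrow J$ in the tie case $\Slope(q_d) = \Slope(q'_d)$ serves as a useful consistency check.
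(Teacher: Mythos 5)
The paper does not prove this lemma itself; it states the formula and cites \cite[Lemma~4.5]{doucot2021diagrams}, so there is no proof here to compare against. Your reduction of $B_{I,J}$ to $\Irr(\Hom(\mathcal{I}_q,\mathcal{I}_{q'}))-rr'$, followed by splitting the $rr'$ pairs of Galois conjugates according to the common-part exponent at which their truncations first diverge and counting stabilizers in $\mu_{r'}$, is the direct and natural route, and it does produce the formula. In fact it produces it in a form that is slightly \emph{more} correct than the lemma as transcribed: the first term you derive is $m_0\bigl(r'-\gcd(r',m'_0)\bigr)$, with a prime, whereas the statement has $\gcd(r',m_0)$. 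These differ in general (e.g.\ $r=6$, $r'=4$, $k_0=1/2$ gives $m_0=3$, $m'_0=2$, $\gcd(4,3)=1\neq 2=\gcd(4,2)$), and it is your version that is consistent with the rescaled formula the paper proves later, where $\gcd(r',m'_0,\dots,m'_j)/r'=1/\lcm(d_0,\dots,d_j)$ is used for all $j=0,\dots,t$; the statement has a typo.

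There is one step you assert without justification that genuinely needs an argument: that every pair $(\tilde q,\tilde q')$ agreeing on the entire common part contributes slope \emph{exactly} $f_{I,J}$. When $\Slope(q_d)>\Slope(q'_d)$ this is automatic, but when $\Slope(q_d)=\Slope(q'_d)$ the leading terms of $\tilde q_d$ and $\tilde q'_d$ have the same exponent and could a priori cancel, dropping the slope below $f_{I,J}$. To rule this out you must invoke the minimality of $k$ in the definition of the common part: if some conjugate pair agreed down to level $f_{I,J}<k$, then $\langle\tau_{f_{I,J}}(q)\rangle=\langle\tau_{f_{I,J}}(q')\rangle$ (they share a common element), and since $f_{I,J}\in\frac{1}{\lcm(r,r')}\mathbb{Z}_{>0}$ this contradicts the choice of $k$. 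With that observation supplied, the final bucket really does contribute $s_d\cdot\gcd(r',m'_0,\dots,m'_t)$ and the argument is complete. You should also note explicitly, as you implicitly use, that for a fixed $\tilde q$ there is always at least one conjugate $\tilde q'$ matching it on any truncation of the common part (because $\langle q_c\rangle=\langle q'_c\rangle$ forces the truncated Galois orbits to coincide), so the stabilizer count gives $\gcd(r',m'_0,\dots,m'_j)$ and not $0$.
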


Notice the structure of the formula: each level of the common part $I_c$ contributes one positive term (the terms coming from the other exponents of $I_c$ vanish), and there is one extra positive term coming from the fission exponent $f_{I,J}$. 

\begin{remark}
In particular, we have:
\begin{itemize}
\item If $I=\cir{\lambda z^{s/r}}$, $J=\cir{\lambda' z^{s'/r'}}$ with $\lambda, \lambda'\in\mathbb C^*$,  $s, r$ coprime, $s',r'$ coprime and $\frac{s}{r}\geq \frac{s'}{r'}$, then
\begin{equation}
B_{I,J}=r'(s-r).
\end{equation}
\item If $I$, $J$ are untwisted, i.e. the exponential factors $q$, $q'$ are polynomials in $z$, then 
\begin{equation}
B_{I,J}=\deg(q-q')-1.
\end{equation}
\end{itemize}
\end{remark}

\begin{example}~
\label{ex:some_diagrams}
\begin{itemize}
\item Consider $I=\cir{z^{5/3}}$, $J=\cir{z^{3/2}}$, $K=\cir{z^{7/3}}$. 
Since $I,J,K$ have pairwise distinct leading terms, the common parts for the pairs $(I,J)$, $(I,K)$, and $(J,K)$ are all zero. The fission exponents are 
\[
f_{I,J}=\frac{5}{3}, \quad f_{I,K}=\frac{7}{3}, \quad f_{J,K}=\frac{7}{3}.
\]
Consider the irregular class $\Theta:=I+J+K$. The matrix of edge/loop multiplicities for its Stokes circles is 
\[
B=\begin{pmatrix}
2 & 4 & 12\\
4 & 0 & 8\\
12 & 8 & 6\\
\end{pmatrix}
\]
The nonabelian Hodge diagram of $\Theta$ is thus (remember that the number of loops at $i$ in the diagram is $B_{ii}/2$):
\begin{center}
\begin{tikzpicture}[scale=0.8]
\tikzstyle{vertex}=[circle,fill=black,minimum size=6pt,inner sep=0pt]
\node[vertex] (K) at (0,0){} ;
\node[vertex] (J) at (2,0){} ; 
\node[vertex] (I) at (60:2){};
\draw (I) to[out=45, in=0] ($(60:2)+(0,0.5)$) to[out=180, in=135] (I);
\draw (J) to[out=-120+45, in=-120+0] ($(J)+(-30:0.5)$) to[out=-120+180, in=-120+135] (J);
\draw (K) to[out=120+45, in=120+0] ($(K)+(-150:0.5)$) to[out=120+180, in=120+135] (K);
\draw (J) to node[midway, above right] {$4$} (I);
\draw (J) to node[midway, below] {$8$} (K);
\draw (K) to node[midway, above left] {$12$} (I);
\draw (I)++(90:0.8) node {$1$};
\draw (J)++(-30:0.8) node {$0$};
\draw (K)++(-150:0.8) node {$3$};
\end{tikzpicture}
\end{center}

\item Consider $I=\cir{z^{5/2}+z^{7/3}}$, $J=\cir{z^{5/2}+z^{5/4}}$, $K=\cir{z^{5/2}}$.
The corresponding matrix of multiplicities is 
\[
B=\begin{pmatrix}
38 & 34 & 17 \\
34 & 10 & 7 \\
17 & 7 & 2\\
\end{pmatrix}
\]
The nonabelian Hodge diagram of $\Theta$ is thus
\begin{center}
\begin{tikzpicture}[scale=0.8]
\tikzstyle{vertex}=[circle,fill=black,minimum size=6pt,inner sep=0pt]
\node[vertex] (K) at (0,0){} ;
\node[vertex] (J) at (2,0){} ; 
\node[vertex] (I) at (60:2){};
\draw (I) to[out=45, in=0] ($(60:2)+(0,0.5)$) to[out=180, in=135] (I);
\draw (J) to[out=-120+45, in=-120+0] ($(J)+(-30:0.5)$) to[out=-120+180, in=-120+135] (J);
\draw (K) to[out=120+45, in=120+0] ($(K)+(-150:0.5)$) to[out=120+180, in=120+135] (K);
\draw (J) to node[midway, above right] {$34$} (I);
\draw (J) to node[midway, below] {$7$} (K);
\draw (K) to node[midway, above left] {$17$} (I);
\draw (I)++(90:0.8) node {$19$};
\draw (J)++(-30:0.8) node {$5$};
\draw (K)++(-150:0.8) node {$1$};
\end{tikzpicture}
\end{center}
\end{itemize}
\end{example}

\begin{remark}
The motivation for the definition of edge/loop multiplicities $B_{I,I}$, $B_{I,J}$ is that they correspond to the contribution to the dimension of the wild character variety $\mathcal M_B(\Theta)$ defined by the irregular class $\Theta$ of the blocks in position $I,I$ and $I,J$ in the Stokes matrices. These quantities happen to be closely related to the intersection numbers of some plane branches corresponding to the Stokes circles, see \cite{hiroe2017ramified}.
\end{remark}

\subsection{Fission trees}

The structure of these formulas for edge and loop multiplicities can be understood in a convenient way in terms of the \emph{fission tree} of the irregular class $\Theta$, a refined combinatorial object which encodes the level data of its Stokes circles as well as the fission exponents of pairs of distinct Stokes circles. 

We refer the reader to  \cite[§3.4]{boalch2025twisted} for full details of the construction of fission trees in type $A$ (fission trees have also been introduced for other classical groups \cite{doucot2022local, doucot2025twisted}). Here we will just recall the main properties that are useful for our purposes. 

Let $\Theta$ be an irregular class at infinity. The corresponding fission tree is a tuple $(\mathcal T,\mathbb V,\mathbb A,\mathbb L, h,n)$, where:
\begin{itemize}
\item $\mathcal T$ is a metrized tree, with set of vertices $\mathbb V$. An edge of the tree is an element of $E:=\pi_0(\mathcal T \smallsetminus V)$ of $\mathcal T$. Any vertex that is not a leaf is adjacent to $\geq 2$ edges, one of which is the parent edge, and the others are the descendant edges. The branch vertices $\mathbb Y\subset \mathbb V$ are
those with $\geq 2$ descendants. The trunk of the
tree is the union of all the edges and vertices above all the branch vertices.

If $i$ is a leaf of $\mathcal T$, we define the \emph{full branch} $\mathcal B_i$ of $i$ as the subset of $\mathcal T$ corresponding to the path from $i$ all the way to the end of the trunk.
\item $\mathbb L$ and $\mathbb A$ are two subsets of $\mathbb V$, whose elements are respectively called mandatory and admissible vertices, and $\mathbb L\subset \mathbb A$. The set $\mathbb L$ is finite and can be empty.

\item $h:\mathcal T\to \mathbb R_{\geq 0}$ is a length-preserving function, which we call the height, such that $h^{-1}(0)=:\mathbb V_0$ is the set of leaves of $\mathcal T$, $h$ maps isomorphically any edge to an interval, and for any leaf $i$, $h$ maps the full branch $\mathcal B_i$ isomorphically onto $\mathbb R_{\geq 0}$.
\item $n$ is a map $\mathbb V_0\to \mathbb Z_{\geq 1}$, i.e. consists of the datum of an integer multiplicity for each leaf. 
\end{itemize}

Moreover, the fission tree has the following properties:
\begin{itemize}
\item The leaves are identified with the Stokes circles of $\Theta$: if $\Theta=\sum_{i\in N}n_i I_i$, we have $\mathbb V_0=N$, the leaf $i$ corresponds to the Stokes circle $I_i$, and its multiplicity is $n_i$.
\item If $i\in \mathbb V_0$ is a leaf and $I$ is the corresponding Stokes circle, the heights of the mandatory vertices of the full branch $\mathcal B_i$ are exactly the levels of $I$, that is
\[
h(\mathbb L\cap \mathcal B_i)=L(I),
\]
and all the exponents of $I$ are heights of admissible vertices of  $\mathcal B_i$, i.e. 
\[
h(\mathbb A\cap \mathcal B_i)\supset E(I).
\]
\item If $i,j\in \mathbb V_0$ are two distinct leaves and $I,J$ are the corresponding Stokes circles of $\Theta$, and $v_{ij}\in \mathbb V$ is closest common ancestor of $i,j$ (in particular $v\in \mathbb Y$, i.e. $v_{ij}$ is a branching vertex), then:
\begin{itemize}
 \item All exponents (and in particular all levels) of the common part $I_c=J_c$ of $I$ and $J$ are greater than the height $h(v_{ij})$, i.e. 
 \[
 \min(E(I_c))\geq h(v_{ij})
 \]
 \item The height of all children of $v_{ij}$ is equal to the fission exponent $f_{I,J}$.
\end{itemize}
\end{itemize}

\begin{example} As in Example \ref{ex:some_diagrams}, consider $I=\cir{z^{5/3}}$, $J=\cir{z^{3/2}}$, $K=\cir{z^{7/3}}$, and the irregular class $\Theta:=I+J+K$. Its fission tree is drawn below: 
\begin{center}
\begin{tikzpicture}[scale=1.8]
\tikzstyle{empty}=[circle,fill=black,minimum size=0pt,inner sep=0pt]
\tikzstyle{mandatory}=[circle,fill=black,minimum size=4pt,draw, inner sep=0pt]
\tikzstyle{inc}=[circle,fill=white,minimum size=4pt,draw, inner sep=0pt]
\tikzstyle{indeterminate}=[circle,densely dotted,fill=white,minimum size=4pt,draw, inner sep=0pt]
\node[empty] (R) at (2,3.5){};
\node[inc] (X) at (2,3){};
\node[mandatory] (i7/3) at (3, 7/3){};
\node[empty] (jk7/3) at (1.5, 7/3){};
\node[inc] (i2) at (3, 2){};
\node[inc] (jk2) at (1.5, 2){};

\node[inc] (i5/3) at (3, 5/3){};
\node[mandatory] (j5/3) at (1, 5/3){};
\node[empty] (k5/3) at (2, 5/3){};

\node[empty] (i3/2) at (3, 3/2){};
\node[empty] (j3/2) at (1, 3/2){};
\node[mandatory] (k3/2) at (2, 3/2){};

\node[inc] (i4/3) at (3, 4/3){};
\node[inc] (j4/3) at (1, 4/3){};
\node[empty] (k4/3) at (2, 4/3){};

\node[inc] (i1) at (3, 1){};
\node[inc] (j1) at (1, 1){};
\node[inc] (k1) at (2, 1){};

\node[inc] (i2/3) at (3, 2/3){};
\node[inc] (j2/3) at (1, 2/3){};
\node[empty] (k2/3) at (2, 2/3){};

\node[empty] (i1/2) at (3, 1/2){};
\node[empty] (j1/2) at (1, 1/2){};
\node[inc] (k1/2) at (2, 1/2){};

\node[inc] (i1/3) at (3, 1/3){};
\node[inc] (j1/3) at (1, 1/3){};
\node[empty] (k1/3) at (2, 1/3){};

\node[empty] (i) at (3, 0){};
\node[empty] (j) at (1, 0){};
\node[empty] (k) at (2, 0){};

\draw (1,-0.3) node {$i$};
\draw (2,-0.3) node {$j$};
\draw (3,-0.3) node {$k$};

\foreach \n / \h in {3/3,{7/3}/{7/3},2/2, {5/3}/{5/3}, {3/2}/{3/2}, {4/3}/{4/3}, 1/1, {2/3}/{2/3}, {1/2}/{1/2}, {1/3}/{1/3}} 
\draw (0.5, \h) node{{\scriptsize $\n$}}; 
\draw (R)--(X);
\draw (X)--(jk7/3)--(jk2);

\draw (X)--(i7/3)--(i2)--(i5/3)--(i3/2)--(i4/3)--(i1)--(i2/3)--(i1/2)--(i1/3)--(i);
\draw(jk2)--(j5/3)--(j3/2)--(j4/3)--(j1)--(j2/3)--(j1/2)--(j1/3)--(j);
\draw(jk2)--(k5/3)--(k3/2)--(k4/3)--(k1)--(k2/3)--(k1/2)--(k1/3)--(k);

\draw[dashed] (0.7,5/3)--(2.3,5/3);
\draw (2.6,5/3) node {$f_{I,J}$};
\draw[dashed] (1.3,7/3)--(3.3,7/3);
\draw (4,7/3) node {$f_{I,K}=f_{J,K}$};

\draw (1.26,2) node {$v_{ij}$};
\draw (2.5,3) node {$v_{ik}=v_{jk}$};
\end{tikzpicture}
\end{center}
In this picture, the leaves $i,j,k$ of the tree correspond to the Stokes circles $I,J,K$ respectively. The rational numbers on the left are the heights of the vertices. The mandatory vertices are drawn in black, the other admissible vertices are drawn in white, and the remaining \emph{empty} vertices are not drawn, in agreement with the conventions used in \cite{boalch2025twisted}.
\end{example}

\section{Nonabelian Hodge graphs that are not fission graphs}
\label{sec:untwisted_case}

In order to find examples of nonabelian Hodge graphs which are not fission graphs, in this section we first show that the class of fission graphs admits a simple characterisation, involving the ultrametric condition for edge multiplicities.

\begin{definition}
Let $\Gamma=(N,B)$ be a graph, and $T=\{i,j,k\}\subset N$ a triangle of $\Gamma$. We say that $T$ is \emph{acute isosceles} if the two largest numbers among its edge multiplicities $B_{ij}$, $B_{ik}$, $B_{ik}$ are equal.
\end{definition}

\begin{theorem}\label{thm:characterisation_fission_graphs}
Let $\Gamma$ be a graph. Then it is a fission graph if and only if all its triangles are acute isosceles. 
\end{theorem}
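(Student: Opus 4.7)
The plan is to exploit the explicit formula for edge multiplicities in the untwisted case recalled after Lemma \ref{lemma:formula_edge_multiplicity}: if $I = \cir{q}$ and $J = \cir{q'}$ are two distinct untwisted Stokes circles at infinity, with $q, q' \in z\mathbb{C}[z]$, then $B_{I,J} = \deg(q - q') - 1$.

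For the forward direction (fission graph $\Rightarrow$ all triangles acute isosceles), suppose $\Gamma = (N,B)$ arises from an untwisted irregular class $\Theta = \sum_i \cir{q_i}$. For any triangle $\{i,j,k\}$, the identity $q_i - q_k = (q_i - q_j) + (q_j - q_k)$ gives the ultrametric inequality $\deg(q_i - q_k) \leq \max(\deg(q_i - q_j), \deg(q_j - q_k))$. The standard consequence is that among the three polynomial-degree differences the maximum is attained at least twice, otherwise the leading term of the unique maximum could not cancel in the above sum. Subtracting $1$, this is exactly the acute isosceles condition on $B_{ij}, B_{ik}, B_{jk}$.

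For the converse, I would construct polynomials $q_i \in z\mathbb{C}[z]$ realizing the prescribed multiplicities by induction on $|N|$. Let $M := \max_{i \neq j} B_{ij}$. The acute isosceles condition ensures that the relation $i \sim j \iff B_{ij} < M$ (together with $i \sim i$) is an equivalence relation: indeed, if $B_{ij}, B_{jk} < M$ but $B_{ik} = M$, then the triangle $\{i,j,k\}$ would have maximum $M$ attained only once, contradicting the hypothesis. Let $C_1,\dots, C_m$ be the resulting equivalence classes; since $M$ is attained, $m \geq 2$, so each $|C_\alpha| < |N|$. Each subgraph induced on $C_\alpha$ still satisfies the acute isosceles property, so by induction I obtain polynomials $p_i^\alpha \in z\mathbb{C}[z]$ of degree $\leq M$ realizing the multiplicities within $C_\alpha$. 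Choosing pairwise distinct $a_\alpha \in \mathbb{C}$ and setting $q_i := a_\alpha z^{M+1} + p_i^\alpha$ for $i \in C_\alpha$, a direct check gives $\deg(q_i - q_j) = B_{ij} + 1$ both within a class (the $z^{M+1}$ terms cancel) and across classes (the leading term is $(a_\alpha - a_\beta)z^{M+1}$ of degree $M+1$). Then $\Theta := \sum_i \cir{q_i}$ is an untwisted irregular class at infinity realizing $\Gamma$ as a fission graph.

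The only nontrivial step is the transitivity verification making $\sim$ an equivalence relation, which is precisely where the triangle hypothesis enters; I expect everything else to be routine. The base cases are handled separately: $|N| = 1$ takes any polynomial (e.g.\ $q_1 = 0$, giving the tame circle), and $|N| \geq 2$ with all $B_{ij} = 0$ is obtained by taking $q_i = a_i z$ for distinct $a_i \in \mathbb{C}$.
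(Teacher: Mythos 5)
Your proof is correct, and the forward direction is identical to the paper's: both observe that $q_i-q_k=(q_i-q_j)+(q_j-q_k)$ forces the three degrees to satisfy the non-archimedean triangle inequality, so the maximum degree is attained at least twice. For the converse you exploit the same essential observation as the paper---that the acute-isosceles hypothesis makes the relation ``edge multiplicity below a threshold'' transitive---but you organise it in the opposite direction and at a more concrete level. You run a top-down recursion on $\lvert N\rvert$, splitting at the maximal multiplicity $M$ and directly manufacturing the exponential factors $q_i=a_\alpha z^{M+1}+p_i^\alpha$; the paper instead iterates bottom-up over heights $h=0,\dots,K$, building a fission forest $\mathbf F_h$ one level at a time and only afterwards invoking the existence of an irregular class with that fission tree. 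Your version is somewhat more self-contained, since it bypasses fission trees entirely and exhibits an untwisted irregular class by hand; the paper's version has the advantage of producing the fission tree explicitly, which is the natural intermediate object in the rest of the article. The one point worth spelling out in your write-up is the degree bookkeeping: within each class $C_\alpha$ the maximal multiplicity $M_\alpha$ is strictly less than $M$, so the inductively produced $p_i^\alpha$ have degree $M_\alpha+1\leq M$, which is what guarantees that $(a_\alpha-a_\beta)z^{M+1}$ genuinely dominates in the cross-class comparisons.
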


\begin{proof}
Let $\Gamma=(N, B)$ be a fission graph. By definition, there exists an untwisted irregular class $\Theta$ (unique up to admissible deformations) such that $\Gamma=\Gamma_c(\Theta)$. The set $N$  of vertices of $\Gamma$ is the set of Stokes circles of $\Theta$. 
Let $i, j, k$ be three vertices of $\Gamma$, and let $\langle q_i\rangle$, $\langle q_j \rangle$, $\langle q_k \rangle$ be the corresponding Stokes circles. By definition of fission graphs, we have $B_{ij}=\deg(q_i-q_j)-1$, $B_{jk}=\deg(q_j-q_k)-1$, $B_{ik}=\deg(q_i-q_k)-1$. Without loss of generality we may assume $B_{ij}\leq B_{ik}\leq B_{jk}$. Now either $B_{ij}=B_{ik}=B_{jk}$, and in that case the triangle $ijk$ is ``equilateral'', and in particular acute isosceles, or $B_{ij}<B_{jk}$. In the latter case, we have $\deg(q_i-q_j)<\deg(q_j-q_k)$, which implies $\deg(q_i-q_k)=\deg((q_i-q_j)+(q_j-q_k))=\deg(q_j-q_k)$, so $B_{ik}=B_{jk}$ and the triangle $ijk$ is acute isosceles.

Conversely, if $\Gamma=(N, B)$ is a graph such that all its triangles are acute isosceles, we can construct from it an untwisted fission tree. We proceed as follows.  Let $K-1$ be the maximal edge multiplicity appearing in $\Gamma$. We construct recursively, for $h=0, \dots, K$, an untwisted fission forest (i.e. a finite collection of untwisted fission trees) $\mathbf F_h$ with the following properties $(P_h)$. 
\begin{itemize}
\item The set of leaves of $\mathbf F_h$ is $N_0:=N$. 
\item The set of vertices of $\mathbf F_h$ is a disjoint union $N_0\sqcup N_1 \sqcup \dots \sqcup N_h$. If $x\in N_l$, $l=0, \dots h$, is a vertex of $\mathbf F_h$ we say that $x$ is of height $l$. 
\item For $l=1,\dots, h$, if $x, y\in N_l$, $x\neq y$ are two distinct vertices of height $l$, if $i\in N$ is any leaf that is a descendent of $x$ and $i\in N$ is any leaf that is a descendent of $y$, we have $B_{ij}\leq l-1$. Furthermore, if $x,y\in N$ are two leaves, and $1\leq l\leq h$, then $B_{xy}=l-1$ if and only if $x$ and $y$ have a common ancestor of height $l$ in $\mathbf F_h$. 
\end{itemize}
The forests $\mathbf F_h$ are defined as follows:

\begin{itemize}
\item $\mathbf F_0$ has set of vertices $N_0$ and no edges. 
\item Let us define $\mathbf F_1$. If $i,j\in N_0$, we write $i\sim_0 j$ if $B_{ij}=0$. This yields a well-defined equivalence relation on $N_0$, indeed it is reflexive and symmetric, and for the transitivity if $i, j,k\in N_0$ are pairwise distinct and such that $B_{ij}=B_{jk}=0$, then since the triangle $ijk$ is acute isosceles we also have $B_{ik}=0$. So we define $N_1$ to be the set of equivalence classes of $N_0$ for the relation $\sim_0$. The edges are defined as follows: for each $i\in N_0$ we draw an edge between $i$ and the vertex of $N_1$ representing its equivalence class. By construction, clearly $\mathbf F_1$ satisfies the property $(P_1)$

\item For $h\leq k$, assuming $\mathbf F_0, \dots, \mathbf F_{h-1}$ already defined and satisfying the sought after properties, let us define $\mathbf F_h$. If $i, j\in N_{h-1}$, we say that $i\sim_{h-1} j$ if for some (or equivalently any) leaf $x\in N_0$ descendent of $i$, and some (or equivalently any) leaf $y\in N_0$ descendent of $j$, we have $B_{xy}=h-1$. We claim that this yields a well-defined equivalence relation $\sim_{h-1}$ on $N_{h-1}$. 

To see that $\sim_{h-1}$ is well-defined, we have to check the above claim that `for some' descendent is equivalent to `for any' descendent. Keeping the same notations, let $x,x'$ be two distinct leaves of $i$, and assume that $B_{xy}=h-1$. Then by the induction hypothesis, $\mathbf F_{h-1}$ satisfies the property $(P_{h-1})$, which implies that $B_{xx'}\leq h-2$. Now consider the triangle $xx'y$ in $\Gamma$. It is is acute isosceles, and $B_{xx'}<B_{xy}$, so $B_{x'y}=B_{xy}=h-1$, which proves our claim. 

The reflexivity and symmetry are clear. For the transitivity, let $i, j, k\in N_{h-1}$  be pairwise distinct such that $i\sim_{h-1} j$ and $j\sim_{h-1} k$. Then if $x\in N_0$ is a leaf that is a descendent of $i$, $y\in N_0$ a leaf descendent of $j$, and $z\in N_0$ a leaf descendent of $k$, we have $B_{ij}=B_{jk}=h-1$, so since the triangle $ijk$ is acute isosceles we have $B_{ik}\leq h-1$. But we also have $B_{ik}\geq h-1$ (otherwise $y$ and $z$ would have a common ancestor of height $\leq h-2$ in $\mathbf F_{h-2}$, which would imply $i=k$), so $B_{ik}=h-1$ and $i\sim_{h-1}k$. 

We define $N_h$ to be the set of equivalences classes for $\sim_{h-1}$ in $N_{h-1}$, and define the edges of $\mathbf F_h$ by keeping those of $\mathbf F_{h-1}$, and adding edges between $N_{h-1}$ and $N_h$ as follows: if $i\in N_{h-1}$ we draw an edge between $i$ and its equivalence class in $N_h$. By construction, $\mathbf F_h$ satisfies the property $(P_h)$. 
\end{itemize}

Since $K-1$ is the maximal edge multiplicity, the forest $\mathbf F_K$ is connected (any two leaves have a common ancestor at height $\leq K-1$), i.e. is actually a tree. By construction, if $\Theta$ is any irregular class such that the part of height $\geq 1$ of its fission tree is equal to $\mathbf F_K$ with the heights of the vertices shifted by 1 (such an irregular class always exists), the corresponding fission diagram $\Gamma_c(\Theta)$ is exactly $\Gamma$, which concludes the proof.
\end{proof}

\begin{example}[cf. Fig. 6 of \cite{boalch2025counting}]
Consider the following graph

\begin{center}
\begin{tikzpicture}[scale=0.7]
\tikzstyle{vertex}=[circle,fill=black,minimum size=6pt,inner sep=0pt]
\tikzstyle{empty}=[circle,fill=black,minimum size=0pt,inner sep=0pt]
\node[vertex] (i) at (-1,1){};
\draw(-1.3,1.3) node{$i$};
\node[vertex] (j) at (1,1){};
\draw(1.3,1.3) node{$j$};
\node[vertex] (k) at (1,-1){};
\draw(1.3,-1.3) node{$k$};
\node[vertex] (l) at (-1,-1){};
\draw(-1.3,-1.3) node{$l$};
\draw[double distance=0.08cm] (i)--(k);
\draw[double distance=0.08cm] (i)--(l);
\draw[double distance=0.08cm] (j)--(k);
\draw[double distance=0.08cm] (j)--(l);
\draw (k)--(l);
\end{tikzpicture}
\end{center}
It satisfies the ultrametric condition, so it is a fission graph. The corresponding fission tree is

\begin{center}
\begin{tikzpicture}[scale=0.8]
\tikzstyle{inc}=[circle,fill=white,minimum size=4pt,draw, inner sep=0pt]
\tikzstyle{empty}=[circle,fill=black,minimum size=0pt,inner sep=0pt]
\node[empty] (R) at (1.5,5){};
\node[inc] (X) at (1.5,4){};
\node[inc] (ij3) at (0.5,3){};
\node[inc] (kl3) at (2.5,3){};
\node[inc] (ij2) at (0.5,2){};
\node[inc] (k2) at (2,2){};
\node[inc] (l2) at (3,2){};
\node[inc] (i1) at (0,1){};
\node[inc] (j1) at (1,1){};
\node[inc] (k1) at (2,1){};
\node[inc] (l1) at (3,1){};
\node[empty] (i) at (0,0){};
\node[empty] (i) at (0,0){};
\node[empty] (j) at (1,0){};
\node[empty] (k) at (2,0){};
\node[empty] (l) at (3,0){};
\draw (R)--(X);
\draw (X)--(ij3)--(ij2);
\draw (X)--(kl3);
\draw (ij2)--(i1)--(i);
\draw (ij2)--(j1)--(j);
\draw (kl3)--(k2)--(k1)--(k);
\draw (kl3)--(l2)--(l1)--(l);

\draw (0,-0.5) node {$i$};
\draw (1,-0.5) node {$j$};
\draw (2,-0.5) node {$k$};
\draw (3,-0.5) node {$l$};

\draw (-1,1) node {$1$};
\draw (-1,2) node {$2$};
\draw (-1,3) node {$3$};
\draw (-1,4) node {$4$};
\end{tikzpicture}
\end{center}
An irregular class with this fission tree is for instance $\Theta=I+J+K+L$, with
\[
I=\cir{-z^3-z}, \quad J=\cir{-z^3+z}, \quad K=\cir{z^3-z^2}, \quad L=\cir{z^3+z^2}.
\]
\end{example}

We can use this to obtain our first main result:

\begin{proposition}
\label{prop:counter_example_fission_graph}
There exist nonabelian Hodge graphs which are not fission graphs. 
\end{proposition}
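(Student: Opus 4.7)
The proof should be carried out by exhibiting an explicit irregular class whose core nonabelian Hodge diagram is a triangle failing the ultrametric condition, so that Theorem \ref{thm:characterisation_fission_graphs} forbids it from being a fission graph. A convenient candidate, already highlighted in Figure \ref{fig:counter_examples_graphs_intro}(A), is the triangle with edge multiplicities $3,4,6$: since $3<4<6$ but $4\neq 6$, its two largest sides are unequal, so it is not acute isosceles. The plan is therefore to realize this triangle as $\Gamma_c(\Theta)$ for a suitable $\Theta$.

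First I would take the three Stokes circles
\[
I=\cir{z^4},\qquad J=\cir{z^3},\qquad K=\cir{z^{3/2}},
\]
and set $\Theta:=I+J+K$. Such a $\Theta$ is clearly the irregular class at infinity of some algebraic connection on $\mathbb A^1$: one may take the direct sum of two rank $1$ meromorphic connections with exponential factors $z^4$ and $z^3$, together with the rank $2$ connection obtained by pushing forward along the double cover $w\mapsto w^2=z$ a rank $1$ connection with exponential factor $w^3$. Hence $\Theta$ is a genuine nonabelian Hodge irregular class.

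Next I would verify the claimed multiplicities. Each of the three circles has the form $\cir{z^{s/r}}$ with $\gcd(s,r)=1$, so by the formula recalled after Lemma \ref{lemma:formula_loop_multiplicities} its loop multiplicity is $(r-1)(s-r-1)$, which vanishes for $(s,r)=(4,1),(3,1),(3,2)$. Thus $\Gamma_c(\Theta)$ has no loops. For the edges, the three leading terms are pairwise distinct, so all common parts vanish and the simplified formula $B_{I',J'}=r_{J'}(s_{I'}-r_{I'})$ (where $I'$ has the larger slope) applies: ordering the slopes as $4>3>3/2$ one gets
\[
B_{IJ}=1\cdot(4-1)=3,\qquad B_{IK}=2\cdot(4-1)=6,\qquad B_{JK}=2\cdot(3-1)=4.
\]
Thus $\Gamma_c(\Theta)$ is exactly the triangle with edges $3,4,6$ and no loops, hence a nonabelian Hodge graph.

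Finally, applying Theorem \ref{thm:characterisation_fission_graphs} to this triangle: the two largest edge multiplicities are $4$ and $6$, which are distinct, so the triangle is not acute isosceles, and consequently $\Gamma_c(\Theta)$ is not a fission graph. This produces the desired counter-example. The only mild subtlety in the argument is the realizability step (confirming that $\Theta$ really arises from a global algebraic connection on $\mathbb A^1$, not merely as a formal germ at infinity), but this is routine given the existence of rank $1$ factors and the unramified pushforward construction for the twisted circle $\cir{z^{3/2}}$; everything else reduces to direct substitution into the formulas of Section \ref{sec:nonabelian_Hodge_diag}.
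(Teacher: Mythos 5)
Your proposal is correct and takes essentially the same route as the paper: exhibit an irregular class whose core diagram is a loop-free triangle with edge multiplicities $3,4,6$, observe that $3<4<6$ violates the acute-isosceles (ultrametric) condition, and invoke Theorem \ref{thm:characterisation_fission_graphs} to conclude it is not a fission graph. The only difference is cosmetic: you use $I=\cir{z^4}$, $J=\cir{z^3}$, $K=\cir{z^{3/2}}$, whereas the paper uses $I=\cir{z^3}$, $J=\cir{z^{4/3}}$, $K=\cir{z^{3/2}}$ (and writes out an explicit $6\times 6$ connection matrix for realizability, where you instead note the direct sum/pushforward construction); both choices produce the same triangle, and your formula checks are correct.
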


\begin{proof}
Here is an explicit counter-example. Consider the Stokes circles $I=\cir{z^3}$, $J=\cir{z^{4/3}}$, $K=\cir{z^{3/2}}$, and the irregular  class $\Theta=I+J+K$.
A connection whose irregular class is an admissible deformation of $\Theta$ is for instance given by
\[
\nabla=d-Adz, 
\]
where
\[
A=\begin{pmatrix}
z^2 & 0 & 0 & 0 & 0& 0\\
0 & 0 & 1 & 0 & 0 & 0\\
0 & 0 & 0 & 1 & 0 & 0\\
0 & z & 0 & 0 & 0 & 0\\
0 & 0 & 0 & 0 & 0 & 1\\
0 & 0 & 0 & 0 & z & 0
\end{pmatrix}.
\]
One has $B_{I,I}=B_{J,J}=B_{K,K}=0$, so the corresponding nonabelian Hodge diagram is a graph. Furthermore $B_{I,J}=6$, $B_{J,K}=3$, $B_{K,I}=4$, and the graph is the following triangle. 
\begin{center}
\begin{tikzpicture}[scale=0.8]
\tikzstyle{vertex}=[circle,fill=black,minimum size=6pt,inner sep=0pt]
\node[vertex] (A) at (0,0){} ;
\node[vertex] (B) at (2,0){} ; 
\node[vertex] (C) at (60:2){};
\draw (A) to node[midway, below] {$3$} (B);
\draw (B) to node[midway, above right] {$6$} (C);
\draw (C) to node[midway, above left] {$4$} (A);
\draw (-0.5, -0.5) node {$K$};
\draw (2.5, -0.5) node {$J$};
\draw (60:2)++(0,0.7) node {$I$};
\end{tikzpicture}
\end{center}
It does not satisfy the ultrametric condition, so it is not a fisison graph. From the formula for the dimension involving the Cartan matrix \cite{boalch2020diagrams, doucot2021diagrams}, the dimension of the wild character variety is $22$. 
\end{proof}

\section{Graphs that are not nonabelian Hodge graphs}
\label{sec:general_case}

To find a graph that is not a nonabelian Hodge graph, we need to find a condition satisfied by all nonabelian Hodge graphs, including in the twisted case. As in the untwisted case, we can find such a condition involving ultrametric inequalities, but things are slightly more complicated: we have to introduce rescaled versions of edge/loop multiplicities for Stokes circles.

\begin{definition}~
\begin{itemize}
\item 
Let $I$ be a Stokes circle, and $r$ its ramification order. The rescaled loop multiplicity of $I$ is the rational number
\begin{equation}
\widetilde B_{I,I}:=\frac{B_{I,I}-1}{r^2}
\end{equation}
\item Let $I,J$ be two distinct Stokes circles, with respective ramification orders $r,r'$. The rescaled edge multiplicity between $I$ and $J$ is 
\begin{equation}
\widetilde B_{I,J}:=\frac{B_{I,J}}{rr'}.
\end{equation} 
\end{itemize}
\end{definition}

These rescaled quantities are better behaved, and we have:

\begin{theorem}~
\label{thm:necessary_condition_diagram}
\begin{itemize}
\item Let $I,J,K$ be pairwise distinct Stokes circles, and assume that $\widetilde B_{I,J}\leq \widetilde B_{I,K}\leq \widetilde B_{J,K}$. Then 
\begin{equation}
\widetilde B_{I,K}=\widetilde B_{J,K}. 
\end{equation}
\item Let $I,J$ be two distinct Stokes circles. Then
\begin{equation}
\widetilde B_{I,I}\leq \widetilde B_{I,J}. 
\end{equation}
\end{itemize}
\end{theorem}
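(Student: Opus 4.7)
The plan is to prove both assertions by direct computation from the explicit formulas in Lemmas \ref{lemma:formula_loop_multiplicities} and \ref{lemma:formula_edge_multiplicity}, after first rewriting them in a form making the structural dependence transparent. The starting observation is that if one writes each exponent as a reduced fraction $k_j = n_j/d_j$ and sets $R_j := \lcm(d_0,\dots,d_j)$ (with the convention $R_{-1}:=1$), then $\gcd(r,m_0,\dots,m_j) = r/R_j$ for every $r$ divisible by $R_j$. Substituting into the two lemmas and dividing by $r^2$, respectively $rr'$, the ramifications of the individual Stokes circles cancel, yielding the compact expressions
\begin{align*}
\widetilde B_{I,I} &= \sum_{j=0}^p k^I_j\left(\tfrac{1}{R^I_{j-1}} - \tfrac{1}{R^I_j}\right) - 1,\\
\widetilde B_{I,J} &= \sum_{j=0}^t k_j\left(\tfrac{1}{R_{j-1}} - \tfrac{1}{R_j}\right) + \tfrac{f_{I,J}}{r_c} - 1,
\end{align*}
where in the edge formula the sum runs over the levels of the common part $I_c=J_c$ and $r_c = R_t$. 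The crucial point is that $\widetilde B_{I,J}$ depends on $(I,J)$ only through the Stokes circle $I_c=J_c$ and the fission exponent $f_{I,J}$.

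The next step is a monotonicity lemma: if two pairs $(I,J_1)$ and $(I,J_2)$ satisfy $f_{I,J_1} \leq f_{I,J_2}$, so that $(I,J_1)_c \supseteq (I,J_2)_c$ as truncations of $q_I$, then $\widetilde B_{I,J_1} \leq \widetilde B_{I,J_2}$. This is proved by telescoping: denoting by $k_0>\dots>k_t$ the levels of $(I,J_2)_c$ and $k_0>\dots>k_u$ (with $u\geq t$) the larger set of levels of $(I,J_1)_c$, one obtains
\[
\widetilde B_{I,J_2} - \widetilde B_{I,J_1} = \frac{f_{I,J_2} - k_{t+1}}{R_t} + \sum_{j=t+1}^{u-1}\frac{k_j - k_{j+1}}{R_j} + \frac{k_u - f_{I,J_1}}{R_u}.
\]
The last two terms are manifestly non-negative, and the first is non-negative since $k_{t+1}$, being the largest level of $q_I$ not lying in $(I,J_2)_c$, satisfies $k_{t+1}\leq \Slope(q_d)\leq f_{I,J_2}$. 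The loop-edge inequality $\widetilde B_{I,I}\leq \widetilde B_{I,J}$ is the degenerate case $J_1=I$, obtained by viewing $(I,I)_c$ as $I$ itself with associated fission exponent $0$.

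For the ultrametric statement, I would establish the ultrametric inequality $f_{I,K}\leq \max(f_{I,J},f_{J,K})$ for the fission exponents of three distinct Stokes circles: since $f_{I,J}$ is exactly the largest exponent at which compatible Galois representatives of $q_I,q_J$ disagree, if $q_I,q_J$ and $q_J,q_K$ each agree above some threshold, then so do $q_I,q_K$. This forces two among $\{f_{I,J},f_{I,K},f_{J,K}\}$ to be equal and to coincide with the maximum; say $f_{I,K}=f_{J,K}\geq f_{I,J}$. Picking $k$ slightly above this common value, $\tau_k(q_I)=\tau_k(q_J)$ (because $k>f_{I,J}$), hence $(I,K)_c=(J,K)_c$ as Stokes circles, and the formula of Step~1 gives $\widetilde B_{I,K}=\widetilde B_{J,K}$; the monotonicity lemma applied to $(I,J)$ and $(I,K)$ then yields $\widetilde B_{I,J}\leq \widetilde B_{I,K}$, which is precisely the ultrametric condition. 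The main obstacle I anticipate is bookkeeping in degenerate cases (tame common part $I_c=\cir{0}$ giving $t=-1$; vanishing $q_d$; $I=I_c$ so no $k^I_{t+1}$ exists), which require consistent conventions for empty gcd's and sums, together with care about the choice of Galois-compatible Puiseux representatives when identifying $(I,K)_c$ with $(J,K)_c$.
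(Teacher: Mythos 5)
Your proposal is correct, and the underlying computation is the same one the paper uses; the organisational choices differ slightly in a way worth noting. The opening rewriting of $\widetilde B_{I,J}$ and $\widetilde B_{I,I}$ in terms of $R_j = \lcm(d_0,\dots,d_j)$ is exactly the intermediate lemma proved in Section~4 of the paper, and your observation that $\widetilde B_{I,J}$ depends only on the common part $I_c=J_c$ and on $f_{I,J}$ is made there as a remark. Where you depart from the paper is in the packaging: you extract a single monotonicity lemma ($f_{I,J_1}\leq f_{I,J_2}$ implies $\widetilde B_{I,J_1}\leq \widetilde B_{I,J_2}$, proved by the telescoping you display, using $k_{t+1}\leq f_{I,J_2}$ and $k_u > f_{I,J_1}$), then deduce \emph{both} bullet points from it — the loop inequality as the degenerate case $J_1 = I$, $f_{I,I}=0$, and the ultrametric inequality by first proving the ultrametric triangle inequality for the fission exponents $f$ themselves and using that the two largest $f$'s share a common part. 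The paper instead runs a two-case analysis on the fission tree of $\Theta=I+J+K$ (whether $h_{ij}<h_{jk}$ or not) and performs the telescoping bound in place inside each case, and proves the loop inequality by a second, parallel computation; your monotonicity lemma collapses these three computations to one. The two routes are equivalent at heart — the $f$-ultrametric you invoke is precisely the tree ultrametric that drives the paper's case split, since $f_{I,J}$ is the height of the children of the closest common ancestor $v_{ij}$ — so your proof buys conciseness and a reusable lemma rather than new content. The degeneracies you flag (tame common part, $q_d=0$, $u=t$) are genuine but harmless; the one point worth being explicit about is that when you choose Galois-compatible representatives $q_I,q_J,q_K$ over a common cover $z^{1/\lcm(r_I,r_J,r_K)}$, you should fix $q_J$ first and then adjust $q_I$ and $q_K$ to match it above $f_{I,J}$ and $f_{J,K}$ respectively, after which the identification $(I,K)_c=(J,K)_c$ as truncations of $q_K$ above $f_{I,K}=f_{J,K}$ is immediate.
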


The first step to prove this is to express the formula for the rescaled edge/loops multiplicities in a convenient form.

\begin{lemma} Keeping the notations of lemmas \ref{lemma:formula_loop_multiplicities} and \ref{lemma:formula_edge_multiplicity}, we have:
\begin{itemize} 
\item Let $I$ be a Stokes circle. The rescaled loop multiplicity is 

\begin{equation}
\widetilde B_{I,I}=k_0(1-\frac{1}{d_0})+ \sum_{j=1}^p  k_j\left(\frac{1}{\mathrm{lcm}(d_0, \dots, d_{j-1})}-\frac{1}{\mathrm{lcm}(d_0, \dots, d_{j})}\right)-1.
\end{equation}
\item Let $I,J$ be two distinct Stokes circles. The rescaled edge multiplicity $\widetilde B_{I,J}$ is given by
\begin{equation}
\widetilde B_{I,J}=k_0(1-\frac{1}{d_0})+ \sum_{j=1}^t k_j\left(\frac{1}{\mathrm{lcm}(d_0, \dots, d_{j-1})}-\frac{1}{\mathrm{lcm}(d_0, \dots, d_{j})}\right)+\frac{f_{I,J}}{\mathrm{lcm}(d_0, \dots, d_{t})}-1.
\end{equation}
\end{itemize}
\end{lemma}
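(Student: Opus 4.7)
The plan is to reduce both formulas to a single algebraic identity on gcd's, after which the rest is bookkeeping. Setting $D_j := \mathrm{lcm}(d_0,\ldots,d_j)$ with the convention $D_{-1}:=1$, the key identity I would establish is
\[
\gcd(r,m_0,\ldots,m_j) \;=\; \frac{r}{D_j}, \qquad \gcd(r',m'_0,\ldots,m'_j) \;=\; \frac{r'}{D_j}.
\]

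The cleanest proof is prime by prime. Fix a prime $p$, write $v_p$ for the $p$-adic valuation, and use $m_i = r n_i/d_i$ together with the coprimality $\gcd(n_i,d_i)=1$ to compute $v_p(m_i) = v_p(r)+v_p(n_i)-v_p(d_i)$, where at most one of $v_p(n_i)$ and $v_p(d_i)$ is nonzero. A short case split shows $\min(v_p(r),v_p(m_i)) = v_p(r)-v_p(d_i)$ in every case (when $v_p(d_i)>0$ then $v_p(n_i)=0$ gives equality; when $v_p(d_i)=0$ both sides equal $v_p(r)$). Taking the minimum over $i=0,\ldots,j$ converts this into $v_p(r)-\max_i v_p(d_i)=v_p(r/D_j)$, proving the identity; the primed version follows by replacing $r$ by $r'$.

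With this in hand, both derivations are straightforward substitutions. For Lemma \ref{lemma:formula_loop_multiplicities}, every gcd in the formula for $B_{I,I}$ becomes $r/D_j$, and using $D_{-1}=1$ so that the leading term $m_0(r-r/d_0)$ fits naturally into the sum, one obtains
\[
B_{I,I} \;=\; \sum_{j=0}^{p} m_j\!\left(\frac{r}{D_{j-1}}-\frac{r}{D_j}\right) - r^2 + 1.
\]
Dividing by $r^2$ and replacing $m_j/r$ by $k_j$ produces the claimed expression for $\widetilde B_{I,I}=(B_{I,I}-1)/r^2$. The edge case is entirely analogous: in Lemma \ref{lemma:formula_edge_multiplicity} each $\gcd(r',m'_0,\ldots,m'_j)$ becomes $r'/D_j$, so after dividing by $rr'$ and using $k_j = m_j/r = m'_j/r'$ together with $f_{I,J}=s_d/r$, the leading term gives $k_0(1-1/d_0)$, the middle terms give $\sum_{j=1}^{t} k_j(1/D_{j-1}-1/D_j)$, the contribution from $s_d$ becomes $f_{I,J}/D_t$, and the $-rr'$ becomes $-1$, which is exactly the claimed formula for $\widetilde B_{I,J}$.

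The only nontrivial step is the $p$-adic verification of the gcd identity; once it is in place, everything else reduces to dividing the formulas of Lemmas \ref{lemma:formula_loop_multiplicities} and \ref{lemma:formula_edge_multiplicity} by $r^2$ and $rr'$ and applying the identifications $k_j = m_j/r = m'_j/r'$ and $f_{I,J}=s_d/r$.
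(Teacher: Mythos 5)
Your proof is correct and isolates exactly the same key identity that drives the paper's argument, namely
\[
\gcd(r,m_0,\ldots,m_j)=\frac{r}{\mathrm{lcm}(d_0,\ldots,d_j)},
\]
after which both proofs reduce to dividing the formulas of Lemmas \ref{lemma:formula_loop_multiplicities} and \ref{lemma:formula_edge_multiplicity} by $r^2$ (resp. $rr'$) and substituting $k_j=m_j/r=m'_j/r'$ and $f_{I,J}=s_d/r$. Where you diverge is only in how the identity is established. The paper rewrites each $k_l$ over the common denominator $r_j:=\mathrm{lcm}(d_0,\ldots,d_j)$ as $k_l=n^{(j)}_l/r_j$, factors $r=r_jN_j$, deduces $m_l=n^{(j)}_l N_j$, and then asserts that $n^{(j)}_0,\ldots,n^{(j)}_j$ are coprime to conclude $\gcd(r,m_0,\ldots,m_j)=N_j$; you instead compute $p$-adic valuations directly. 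Your route is both shorter and sharper: the coprimality $\gcd(n^{(j)}_0,\ldots,n^{(j)}_j)=1$ asserted in the paper is not actually true in general (for $k_0=2/3$, $k_1=4/5$ one has $r_1=15$, $n^{(1)}_0=10$, $n^{(1)}_1=12$ with gcd $2$), and what saves the paper's conclusion is the weaker fact that $\gcd(n^{(j)}_0,\ldots,n^{(j)}_j)$ is coprime to $r_j$, which is precisely what your prime-by-prime case split ($v_p(d_i)>0 \Rightarrow v_p(n_i)=0$) delivers automatically. So while the two proofs share the same skeleton, your valuation argument fills in a small gap in the published version.
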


\begin{proof}
For loop multiplicities $\widetilde{B}_{I,I}$, the result follows directly from the fact that for $j=0, \dots, p$ we have
\[
\frac{\mathrm{gcd}(r, m_0, \dots,m_j)}{r}=\frac{1}{\mathrm{lcm}(d_0, \dots, d_j)}.
\]
Let us show this. Take $j\in \{0, \dots,p\}$ 
The smallest possible common denominator for the fractions $k_0,\dots,k_j$ is $r_j:=\mathrm{lcm}(d_0, \dots, d_j)$, i.e. for all $l=0, \dots, j$ we can write
\[
k_l=\frac{n_l}{d_l}=\frac{n_l^{(j)}}{r_j},
\]
which defines an integer $n_l^{(j)}$, and $n_0^{(j)}, \dots, n_j^{(j)}$ are coprime. Now, $r_j$ divides $r=\mathrm{lcm}(d_0, \dots, d_p)$, i.e. there exists an integer $N_j$ such that $r=r_j N_j$. But then we have for $l=1, \dots, j$
\[
k_l=\frac{n_l^{(j)}}{r_j}=\frac{n_l^{(j)}N_j}{r},
\]
which implies $m_l=n_l^{(j)}N_j$ since $m_l$ is defined by $k_l=\frac{m_l}{r}$. Since $\mathrm{gcd}(n_0^{(j)}, \dots, n_j^{(j)})=1$, we thus have $\mathrm{gcd}(m_0,\dots, m_j)=N_j$, and $N_j$ also divides $r$ so $\mathrm{gcd}(r,m_0,\dots, m_j)=N_j=\frac{r}{\mathrm{lcm}(d_0, \dots, d_j)}$, and the desired equality follows. 

For edge multiplicities $\widetilde B_{I,J}$, the result follows from the fact that for $j=0, \dots, t$ we have
\[
\frac{\mathrm{gcd}(r', m'_0, \dots,m'_j)}{r'}=\frac{1}{\mathrm{lcm}(d_0, \dots, d_j)},
\]
which is proven in a completely similar way. 
\end{proof}

\begin{remark}
In particular, notice that $\widetilde B_{I,J}$ only depends on the common part of $I$ and $J$ and of their fission exponent $f_{I,J}$. In terms of the fission tree of the irregular class $\Theta=I+J$, if $i,j$ denote the leaves corresponding to $I$ and $J$ respectively, this means that $\widetilde B_{I,J}$ only depends on the part of the tree which lies above the height of the children of the closest common ancestor $v_{ij}$ (since this height is equal to $f_{I,J})$. 
\end{remark}

\begin{proof}[Proof of Thm. \ref{thm:rescaled_nah_diagram_necessary_condition_intro}]
Let us first prove the ultrametric condition for rescaled edge multiplicities. Let $I,J,K$ be pairwise distinct Stokes circles. Let us consider the irregular class $\Theta:=I+J+K$, and let $(\mathcal T, \mathbb V, \mathbb A, \mathbb L, h, n)$ be the corresponding fission tree. Let us denote by $i,j,k$ the leaves of the tree corresponding to $I,J,K$ respectively, $f_{ij}:=f_{I,J}$, $f_{ik}:=f_{J,K}$, $f_{ik}:=f_{I,K}$ the fission exponents, $v_{ij}, v_{ik}, v_{ik}$ the common ancestors of pairs of distinct leaves, $h_{ij}, h_{jk}, h_{ik}$ their respective heights, and assume that $h_{ij}\leq h_{ik}\leq h_{jk}$. 

There are two possible cases. First, assume that $h_{ij}=h_{ik}= h_{jk}$. In that case one has $v_{ij}=v_{ik}=v_{jk}$ and $f_{ij}=f_{jk}=f_{ik}$, and the pairs of Stokes circles $(I,J)$, $(I,K)$ and $(J,K)$ have the same common parts.  The situation is represented on the figure below\footnote{here the nonempty vertices of the tree are drawn in white with a dotted border to indicate that they may or may not be mandatory, in agreement with the conventions used in \cite{boalch2025twisted}.}, where $k_0>\dots>k_t$ denote the exponents (if they exist) of the common part (one has $k_t=h_{ij}>f_{ij}$):
\begin{center}
\begin{tikzpicture}[scale=0.9]
\tikzstyle{empty}=[circle,fill=black,minimum size=0pt,inner sep=0pt]
\tikzstyle{mandatory}=[circle,fill=black,minimum size=5pt,draw, inner sep=0pt]
\tikzstyle{indeterminate}=[circle,densely dotted,fill=white,minimum size=5pt,draw, inner sep=0pt]
\node[empty] (R) at (1,4){};
\node[indeterminate] (U1_ijk) at (1,3.6){};
\node (U2_ijk) at (1,3.0){{\scriptsize $\vdots$}};
\node[indeterminate] (U3_ijk) at (1,2.4){};
\node[indeterminate](V_ijk) at (1,2){};
\node[indeterminate] (Y_i) at (0,1){};
\node (Y1_i) at (0,0.3){$\vdots$};
\node[indeterminate] (Y_j) at (1,1){};
\node (Y1_j) at (1,0.3){$\vdots$};
\node[indeterminate] (Y_k) at (2,1){};
\node (Y1_k) at (2,0.3){$\vdots$};
\node[empty](i) at (0,-0.4){};
\node[empty] (j) at (1,-0.4){};
\node[empty] (k) at (2,-0.4){};
\draw (R)--(U1_ijk)--(U2_ijk)--(U3_ijk)--(V_ijk);
\draw (V_ijk)--(Y_i);
\draw (V_ijk)--(Y_j);
\draw (V_ijk)--(Y_k);
\draw (Y_i)--(Y1_i)--(i);
\draw (Y_j)--(Y1_j)--(j);
\draw (Y_k)--(Y1_k)--(k);

\draw[dashed] (-0.5,1)--(Y_i)--(Y_j)--(Y_k)--(2.5, 1);
\draw (-2,1) node {$f_{ik}=f_{jk}=f_{jk}$};
\draw (-1.5,3.6) node {$k_0$};
\draw (-1.5,3) node {$\vdots$};
\draw (-1.5,2) node {$k_t$};

\draw (0,-0.8) node {$i$};
\draw (1,-0.8) node {$j$};
\draw (2,-0.8) node {$k$};
\end{tikzpicture}
\end{center}
This implies that we have exactly the same terms in the expressions for $\widetilde B_{I,J}$, $\widetilde B_{I,K}$, $\widetilde B_{J,K}$ (one term coming from each exponent $k_0,\dots,k_t$ of the common part, and one term for the fission exponent) and thus we obtain $\widetilde B_{I,J}=\widetilde B_{I,K}=\widetilde B_{J,K}$.
 
The second case is when $h_{ij}<h_{jk}$. Then necessarily $v_{ik}=v_{jk}$, the pairs of Stokes circles $(I,K)$ and $(J,K)$ have the same common part, and in the fission tree the situation is as on the picture below:

\begin{center}
\begin{tikzpicture}[scale=0.9]
\tikzstyle{empty}=[circle,fill=black,minimum size=0pt,inner sep=0pt]
\tikzstyle{mandatory}=[circle,fill=black,minimum size=5pt,draw, inner sep=0pt]
\tikzstyle{indeterminate}=[circle,densely dotted,fill=white,minimum size=5pt,draw, inner sep=0pt]
\node[empty] (R) at (1,8){};
\node[indeterminate] (U1_ijk) at (1,7.6){};
\node (U2_ijk) at (1,7.0){{\scriptsize $\vdots$}};
\node[empty] (U3_ijk) at (1,6.4){};
\node[indeterminate](V_ijk) at (1,6){};
\node[indeterminate] (W_ij) at (0.5,5){};
\node[indeterminate] (W1_ij) at (0.5,4.5){};
\node (W2_ij) at (0.5,4){{\scriptsize $\vdots$}};
\node[indeterminate] (W3_ij) at (0.5,3.4){};

\node[indeterminate] (W_k) at (2,5){};
\node[indeterminate] (W1_k) at (2,4.5){};
\node[empty] (W2_k) at (2,0.5){};

\node[indeterminate] (X_ij) at (0.5,3){};
\node[indeterminate] (Y_i) at (0,2){};
\node[empty] (Y1_i) at (0,1.5){};
\node[empty] (Y2_i) at (0,0.5){};

\node[indeterminate] (Y_j) at (1,2){};
\node[empty] (Y1_j) at (1,1.5){};
\node[empty] (Y2_j) at (1,0.5){};

\node[empty](i) at (0,0){};
\node[empty] (j) at (1,0){};
\node[empty] (k) at (2,0){};
\draw (R)--(U1_ijk)--(U2_ijk)--(U3_ijk)--(V_ijk);
\draw (V_ijk)--(W_ij);
\draw (V_ijk)--(W_k);
\draw (W_ij)--(W1_ij)--(W2_ij)--(W3_ij)--(X_ij);
\draw (W_k)--(W1_k);
\draw[loosely dotted] (W1_k)--(W2_k);
\draw (W2_k)--(k);
\draw (X_ij)--(Y_i);
\draw (X_ij)--(Y_j);

\draw (Y_i)--(Y1_i);
\draw[loosely dotted] (Y1_i)--(Y2_i);
\draw(Y2_i)--(i);

\draw (Y_j)--(Y1_j);
\draw[loosely dotted] (Y1_j)--(Y2_j);
\draw(Y2_j)--(j);

\draw[dashed] (-0.5,2)--(Y_i)--(Y_j)--(1.5, 2);
\draw[dashed] (-0.5,5)--(W_ij)--(W_k)--(2.5, 5);
\draw (-1.5,5) node {$f_{ik}=f_{jk}$};
\draw (-1.5,2) node {$f_{ij}$};
\draw (-1.5,7.6) node {$k_0$};
\draw (-1.5,7) node {$\vdots$};
\draw (-1.5,6) node {$k_t$};
\draw (-1.5, 4.5) node {$k_{t+1}$};
\draw (-1.5,4) node {$\vdots$};
\draw (-1.5,3) node {$k_u$};

\draw (0,-0.5) node {$i$};
\draw (1,-0.5) node {$j$};
\draw (2,-0.5) node {$k$};

\draw (2, 6) node {$v_{ik}=v_{jk}$};
\draw (0.9, 2.9) node {$v_{ij}$};
\end{tikzpicture}
\end{center}

Let us denote by $k_0>\dots > k_t$ the exponents (if they exist) of the common part of $(I,K)$, and $d_0, \dots, d_t$ their respective denominators when written as a reduced fraction. We have $k_t= h(v_{ik})=h(v_{jk})>f_{ik}=f_{jk}$.

Let us also denote by $k_{t+1}>\dots> k_u$ the exponents (if they exist) of the common part of $(I,J)$ which are $\leq f_{ik}$, and by $d_{t+1}, \dots, d_u$ their denominators. We have $k_u=h(v_{ij})>f_{ij}$.

Now the formulas for $\widetilde B_{I,J}, \widetilde B_{I,K}$ and $\widetilde B_{J,K}$ read as follows:
\[
\widetilde B_{I,K}=\widetilde B_{J,K}=k_0(1-\frac{1}{d_0})+ \sum_{j=1}^t  k_j\left(\frac{1}{\mathrm{lcm}(d_0, \dots, d_{j-1})}-\frac{1}{\mathrm{lcm}(d_0, \dots, d_{j})}\right)+\frac{f_{ik}}{\mathrm{lcm}(d_0, \dots, d_{t})}-1.
\]
and
\[
\widetilde B_{I,J}=k_0(1-\frac{1}{d_0})+ \sum_{j=1}^u  k_j\left(\frac{1}{\mathrm{lcm}(d_0, \dots, d_{j-1})}-\frac{1}{\mathrm{lcm}(d_0, \dots, d_{j})}\right)+\frac{f_{ij}}{\mathrm{lcm}(d_0, \dots, d_{u})}-1.
\]
In these two expressions, the terms corresponding to the heights $k_0, \dots, k_t$ are identical. For the remaining terms in $\widetilde B_{I,J}$, since the heights $k_{t+1}, \dots, k_u$ and $f_{ij}$ are smaller than $h_{ik}$, we have
\begin{align*}
&\sum_{j=t+1}^u  k_j\left(\frac{1}{\mathrm{lcm}(d_0, \dots, d_{j-1})}-\frac{1}{\mathrm{lcm}(d_0, \dots, d_{j})}\right)+\frac{f_{ij}}{\mathrm{lcm}(d_0, \dots, d_{u})}\\
\leq & \;f_{ik}\left(\sum_{j=t+1}^u  \left(\frac{1}{\mathrm{lcm}(d_0, \dots, d_{j-1})}-\frac{1}{\mathrm{lcm}(d_0, \dots, d_{j})}\right)+\frac{1}{\mathrm{lcm}(d_0, \dots, d_{u})}\right)\\
=& \;\frac{f_{ik}}{\mathrm{lcm}(d_0, \dots, d_{t})},
\end{align*}
hence $\widetilde B_{I,J}\leq \widetilde B_{I,K}=\widetilde B_{J,K}$.

Let us now prove the inequality involving loop multiplicities. Let $I,J$ be two distinct Stokes circles. Let us denote by $k_0>\dots > k_t$ (if they exist) the exponents of $I$ which are strictly greater than $f_{I,J}$, and by $k_{t+1}>\dots >k_p$ the exponents of $I$ that are lesser or equal to $f_{I,J}$. The formulas for $\widetilde B_{I,I}$ and $\widetilde B_{I,J}$ read
\[
\widetilde B_{I,I}=k_0(1-\frac{1}{d_0})+ \sum_{j=1}^p  k_j\left(\frac{1}{\mathrm{lcm}(d_0, \dots, d_{j-1})}-\frac{1}{\mathrm{lcm}(d_0, \dots, d_{j})}\right)-1,
\]
\[
\widetilde B_{I,J}=k_0(1-\frac{1}{d_0})+ \sum_{j=1}^t k_j\left(\frac{1}{\mathrm{lcm}(d_0, \dots, d_{j-1})}-\frac{1}{\mathrm{lcm}(d_0, \dots, d_{j})}\right)+\frac{f_{I,J}}{\mathrm{lcm}(d_0, \dots, d_{t})}-1.
\]
In these two formulas, the terms corresponding to the heights $k_0, \dots, k_t$ are the same. For the remaining terms in $\widetilde B_{I,I}$, since the heights $k_{t+1}, \dots, k_p$ are smaller than $f_{I,J}$ we have
\begin{align*}
&\sum_{j=t+1}^p  k_j\left(\frac{1}{\mathrm{lcm}(d_0, \dots, d_{j-1})}-\frac{1}{\mathrm{lcm}(d_0, \dots, d_{j})}\right)\\
\leq & \;f_{I,J}\left(\sum_{j=t+1}^u  \left(\frac{1}{\mathrm{lcm}(d_0, \dots, d_{j-1})}-\frac{1}{\mathrm{lcm}(d_0, \dots, d_{j})}\right)\right)\\
=& \;f_{I,J}\left(\frac{1}{\mathrm{lcm}(d_0, \dots, d_{t})}-\frac{1}{\mathrm{lcm}(d_0, \dots, d_{u})}\right)\\
\leq & \;\frac{f_{I,J}}{\mathrm{lcm}(d_0, \dots, d_{t})},
\end{align*} 
and if follows that $\widetilde B_{I,I}\leq \widetilde B_{I,J}$.
\end{proof}

\begin{example} Let us consider again the irregular classes of Example \ref{ex:some_diagrams}.
\begin{itemize}
\item Consider $I=\cir{z^{5/3}}$, $J=\cir{z^{3/2}}$, $K=\cir{z^{7/3}}$, and $\Theta:=I+J+K$. We have already computed the corresponding matrix $B$ of multiplicities. Using $\mathrm{Ram}(I)=3$, $\mathrm{Ram}(J)=2$, and $\mathrm{Ram}(K)=3$, we find that the matrix of rescaled multiplicities is 
\[
\begingroup
\renewcommand*{\arraystretch}{1.5}
\widetilde B=\begin{pmatrix}
\frac{1}{9} & \frac{2}{3} & \frac{4}{3}\\
\frac{2}{3} & -\frac{1}{4} & \frac{4}{3}\\
\frac{4}{3} & \frac{4}{3} & \frac{5}{9}\\
\end{pmatrix}.
\endgroup
\]
It satisfies the theorem.
\item Consider $I=\cir{z^{5/2}+z^{7/3}}$, $J=\cir{z^{5/2}+z^{3/2}+z^{5/4}}$, $K=\cir{z^{5/2}+z^{3/2}}$.
We have already computed the corresponding matrix $B$ of multiplicities. Using $\mathrm{Ram}(I)=6$, $\mathrm{Ram}(J)=4$, and $\mathrm{Ram}(K)=2$, we find that the matrix of rescaled multiplicities is 
\[
\begingroup
\renewcommand*{\arraystretch}{1.5}
\widetilde B=\begin{pmatrix}
\frac{37}{36} & \frac{17}{12} & \frac{17}{12} \\
\frac{17}{12} & \frac{9}{16} & \frac{7}{8} \\
\frac{17}{12} & \frac{7}{8} & \frac{1}{4}
\end{pmatrix}.
\endgroup
\]
It satisfies the theorem.
\end{itemize}
\end{example}

\begin{remark}
Notice that our proof of Theorem \ref{thm:rescaled_nah_diagram_necessary_condition_intro} is similar to the proof of Ploski's theorem given by Popescu-Pampu in \cite[§4]{popescu2020ultrametrics} using Eggers-Wall trees.
\end{remark}

As an application, we obtain the answer to our second main question:

\begin{proposition}
\label{prop:counter_example_nah_graph}
There exist graphs that are not nonabelian Hodge graphs.
\end{proposition}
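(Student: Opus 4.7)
The plan is to exhibit an explicit graph $\Gamma$ and invoke Theorem~\ref{thm:rescaled_nah_diagram_necessary_condition_intro} to rule out a NAH structure on it. Take $\Gamma$ to be the graph of Fig.~\ref{fig:counter_examples_graphs_intro}(B): the complete graph on four vertices $\{A,B,C,D\}$ whose six edge multiplicities are the six smallest primes $\{2,3,5,7,11,13\}$, with $2,3,5,7$ on the cycle $ABCD$ and $11,13$ on the diagonals. Assume for contradiction that $\Gamma=\Gamma_c(E,\nabla)$ for some algebraic connection on the affine line, and let $r_A,r_B,r_C,r_D\ge 1$ be the ramification orders of the corresponding Stokes circles. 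Theorem~\ref{thm:rescaled_nah_diagram_necessary_condition_intro} then provides an ultrametric on the vertex set with pairwise values $\widetilde B_{ij}=B_{ij}/(r_ir_j)$; the loop inequality is automatic since $B_{ii}=0$.

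Next I would invoke the classical correspondence between ultrametrics on an $n$-point set and rooted dendrograms: the equality pattern among the six $\widetilde B_{ij}$ must match one of exactly two unordered binary tree topologies on four leaves: the \emph{balanced} type $((a,b),(c,d))$, in which the four ``cross'' edges $\{ac,ad,bc,bd\}$ all carry the maximum rescaled multiplicity, or the \emph{caterpillar} type $(((a,b),c),d)$, in which the three edges incident to a single distinguished vertex all carry the maximum rescaled multiplicity.

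Both types are then ruled out. In the balanced case, for each of the three possible pairings of $\{A,B,C,D\}$, combining the four equalities among the cross-edge rescaled multiplicities yields a multiplicative relation of the form $p_{ac}\,p_{bd}=p_{ad}\,p_{bc}$ between the four primes on those edges, which is impossible since the six primes are pairwise distinct. In the caterpillar case, for each of the four possible distinguished vertices $v$, the three equalities $\widetilde B_{vi}=\widetilde B_{vj}=\widetilde B_{vk}$ fix the ratios of the remaining ramification orders as $r_i:r_j:r_k=p_{vi}:p_{vj}:p_{vk}$ up to a common scalar. Substituting into the formulas for the three rescaled multiplicities on the opposite triangle $\{i,j,k\}$ produces three fractions with small numerators whose values are easily checked to be pairwise distinct, contradicting the ultrametric condition on that triangle.

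The main work is therefore just this case analysis: three short symbolic checks in the balanced case and four one-line numerical comparisons in the caterpillar case. The distinctness (in fact multiplicative independence) of the six primes on the edges is what blocks every multiplicative coincidence required for the ultrametric condition. Degenerate dendrograms where internal heights collapse need no separate treatment, as any such collapse only strengthens the relations already shown to be unsatisfiable.
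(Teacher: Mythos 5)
Your proof is correct, but it tackles a harder example than the paper's own proof of this proposition. The paper proves Proposition~\ref{prop:counter_example_nah_graph} with the simplest possible counterexample: a three-vertex graph $\{i,j,k\}$ with $B_{ij}=B_{ik}=0$ and $B_{jk}=1$. Since zeros remain zero under any rescaling $r\in\mathbb Z_{\ge1}^N$, one immediately gets $\widetilde B_{ij}=\widetilde B_{ik}=0<\widetilde B_{jk}$, which already violates the ultrametric condition of Theorem~\ref{thm:rescaled_nah_diagram_necessary_condition_intro}; no case analysis is needed. The four-vertex complete graph with six distinct prime edge multiplicities that you analyse is what the paper treats separately afterwards, as Example~\ref{ex:other_example_not_nah_graph}, precisely to show that the obstruction persists even when all edge multiplicities are positive. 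So you have effectively proved (the stronger) Example~\ref{ex:other_example_not_nah_graph} rather than the minimal proof of the proposition.

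Within that stronger aim, your organisation via the two four-leaf dendrogram topologies is a nice systematisation, and more structured than the triangle-by-triangle chase the paper performs in that example; both routes boil down to forcing an impossible multiplicative relation $p\cdot p'=q\cdot q'$ among four distinct primes. One small gap worth filling explicitly: your closing sentence asserts without proof that degenerate (tied) dendrograms require no separate treatment. This is true, but the clean way to see it is to note that in any ultrametric on four points the set of pairs realising the maximal rescaled multiplicity is a complete multipartite graph on the four vertices with at least two parts, and any such graph either contains all four \emph{cross} edges of some pairing $\{a,b\}\sqcup\{c,d\}$ (balanced case) or all three edges through some vertex (caterpillar case); also note the three triangle comparisons in the caterpillar case fail for an \emph{algebraic} reason (the required identity $p_1p_2=p_3p_4$ among distinct primes is impossible), not merely a numerical one, so no ``small-numerator check'' is actually needed. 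With that observation your argument is complete, and it is a genuinely sharper example than the one used in the paper's proof of the proposition itself.
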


In particular, this implies not all diagrams are nonabelian Hodge diagrams. 

\begin{proof}Consider the following graph $\Gamma=(N,B)$ with vertices $i,j,k$ and edge multiplicities $B_{ij}=B_{ik}=0$, $B_{jk}=1$:
\begin{center}
\begin{tikzpicture}[scale=0.8]
\tikzstyle{vertex}=[circle,fill=black,minimum size=6pt,inner sep=0pt]
\node[vertex] (A) at (0,0){} ;
\node[vertex] (B) at (2,0){} ; 
\node[vertex] (C) at (60:2){};
\draw (A) to node[midway, below] {} (B);
\draw (-0.5, -0.5) node {$k$};
\draw (2.5, -0.5) node {$j$};
\draw (60:2)++(0,0.7) node {$i$};
\end{tikzpicture}
\end{center}

Let us show that it is not a nonabelian Hodge graph. To this end, it is sufficient to show that there does not exist a collection of integers $r=(r_i, r_j, r_k)\in \mathbb Z_{\geq 1}^3$ such that the rescaled diagram $\widetilde{\Gamma}=(N,\widetilde B)$ defined by the decorated diagram $(\Gamma, r)$ has edge multiplicities satisfying the ultrametric conditions. But this is clear: for any $r\in \mathbb Z_{\geq 1}^3$ we have $0=\widetilde B_{ij}=\widetilde B_{ik}<\widetilde B_{jk}$.
\end{proof}

\begin{example}
\label{ex:other_example_not_nah_graph}
Let us give a second example without vanishing edge multiplicities. Consider a graph $\Gamma=(N, B)$ with set of vertices $N=\{a, b, c,d\}$, and where the edge multiplicities $p_{ab}, p_{bc}, p_{cd}, p_{da},p_{ac}, p_{bd}$ are pairwise distinct prime numbers (for instance as in Fig. \ref{subfig:counter_example_nah_graph_intro}):

\begin{center}
\begin{tikzpicture}
\tikzstyle{empty}=[circle,fill=black,minimum size=0pt,inner sep=0pt]
\tikzstyle{vertex}=[circle,fill=black,minimum size=6pt,inner sep=0pt]
\node[vertex] (A) at (-1,1){};
\draw (-1.3,1.3) node {$a$};
\node[vertex] (B) at (1,1){};
\draw (1.3,1.3) node {$b$};
\node[vertex] (C) at (1,-1){}; 
\draw (1.3,-1.3) node {$c$};
\node[vertex] (D) at (-1,-1){};
\draw (-1.3,-1.3) node {$d$};
\draw (A)-- node[midway, above]{$p_{ab}$} (B);
\draw (B)-- node[midway, right]{$p_{bc}$} (C);
\draw (C)-- node[midway, below]{$p_{cd}$} (D);
\draw (D)-- node[midway, left]{$p_{da}$} (A);
\draw (A)--  (C);
\draw (B)-- (D);
\draw  (0.6,-0.15) node {$p_{ac}$};
\draw  (-0.55,-0.15) node{$p_{bd}$};
\end{tikzpicture}
\end{center}

Again, to see that this is not a nonabelian Hodge graph, it is sufficient to show that there does not exist a collection of integers $r=(r_a, r_b, r_c, r_d)\in \mathbb Z_{\geq 1}^4$ such that the corresponding rescaled diagram $\widetilde{\Gamma}=(N,\widetilde B)$ satisfies the ultrametric conditions. Reasoning by contradiction, assume that such a collection $r$ exists. 

Consider the triangle $abc$. Without loss of generality, we can assume that 
\[
\widetilde B_{ac}\leq \widetilde B_{ab}=\widetilde B_{bc}.
\]
Since, by definition of rescaling by $r$, $\widetilde B_{ab}=\frac{p_{ab}}{r_a r_b}$, $\widetilde B_{bc}=\frac{p_{bc}}{r_b r_c}$,
this implies
\[
\frac{r_a}{r_c}=\frac{p_{ab}}{p_{bc}}
\]  
Now consider the triangle $acd$. If $\widetilde B_{ac}\leq \widetilde B_{ad}=\widetilde B_{cd}$, then the same reasoning implies 
\[
\frac{r_a}{r_c}=\frac{p_{da}}{p_{cd}},
\] 
hence $p_{ab}p_{cd}=p_{da}p_{bc}$, and we have a contradiction. So necessarily we have $\widetilde B_{ac}=\widetilde B_{ad}>\widetilde B_{cd}$ or $\widetilde B_{ac}=\widetilde B_{cd}>\widetilde B_{ad}$, say the former. This implies
\[
\frac{r_c}{r_d}=\frac{p_{ac}}{p_{da}}.
\]
Now consider the triangle $bcd$. We have $\widetilde B_{cd}< \widetilde B_{bc}$ (since $\widetilde B_{cd}<\widetilde B_{ac}$, and $\widetilde B_{ac}\leq \widetilde B_{bc}$), so by the ultrametric condition $\widetilde B_{bd}=\widetilde B_{bc}$. This implies
\[
\frac{r_c}{r_d}=\frac{p_{bc}}{p_{bd}},
\]
and we have a contradiction.
\end{example}

As another application, we can characterize the nonabelian Hodge graphs which are simply-laced, i.e. without multiple edges. 

\begin{proposition}
\label{prop:no_new_simply_laced_graphs}
Any simply-laced nonabelian Hodge diagram is a simply-laced fission graph, i.e. a complete $k$-partite graph for some integer $k\geq 1$. 
\end{proposition}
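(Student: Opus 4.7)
The plan is to combine the ultrametric inequality on rescaled edge multiplicities from Theorem \ref{thm:rescaled_nah_diagram_necessary_condition_intro} with the triangle-based characterisation of fission graphs from Theorem \ref{thm:characterisation_fission_graphs}. Let $\Gamma = (N,B)$ be a simply-laced nonabelian Hodge diagram, so that $B_{ii} = 0$ and $B_{ij} \in \{0,1\}$ for $i \neq j$. By Theorem \ref{thm:rescaled_nah_diagram_necessary_condition_intro}, pick a decoration $r \in \mathbb{Z}_{\geq 1}^N$ for which the rescaled diagram satisfies the ultrametric condition. The rescaled edge multiplicities are $\widetilde B_{ij} = B_{ij}/(r_i r_j)$, so $\widetilde B_{ij} = 0$ if and only if $B_{ij} = 0$, while $\widetilde B_{ij} > 0$ if and only if $B_{ij} = 1$.

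The key step is to observe that the ultrametric inequality forces non-adjacency to be a transitive relation. Suppose $i,j,k \in N$ are pairwise distinct with $B_{ij} = B_{jk} = 0$. Then $\widetilde B_{ij} = \widetilde B_{jk} = 0$, and if we had $B_{ik} = 1$, then $\widetilde B_{ik} > 0$. The two largest of the three values $\widetilde B_{ij}, \widetilde B_{jk}, \widetilde B_{ik}$ would then be $0$ and $\widetilde B_{ik}$, which contradicts the ultrametric condition demanding that the two largest coincide. Hence $B_{ik} = 0$. Consequently, the relation defined by $i \sim j$ whenever $i = j$ or $B_{ij} = 0$ is an equivalence relation on $N$. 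Denoting its equivalence classes by $N_1, \ldots, N_k$, we have $B_{ij} = 0$ whenever $i,j$ lie in the same class and $B_{ij} = 1$ whenever they lie in different classes, so $\Gamma$ is exactly the complete $k$-partite graph on the partition $\{N_1, \ldots, N_k\}$.

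To finish, I would verify that $\Gamma$ is a fission graph using Theorem \ref{thm:characterisation_fission_graphs}. Any triple $\{i,j,k\} \subset N$ has edge multiplicities in $\{0,1\}$, and the transitivity of non-adjacency just established rules out the configuration $(0,0,1)$ up to permutation. The remaining possibilities $(0,0,0)$, $(0,1,1)$, and $(1,1,1)$ each have their two largest entries equal, so every such triangle is acute isosceles, and Theorem \ref{thm:characterisation_fission_graphs} yields that $\Gamma$ is a fission graph. I do not anticipate a substantial obstacle here: once Theorem \ref{thm:rescaled_nah_diagram_necessary_condition_intro} is granted, the simply-laced hypothesis is rigid enough that the full structure of $\Gamma$ is pinned down by the one-line deduction that $\{(i,j) : B_{ij} = 0\}$ is an equivalence relation.
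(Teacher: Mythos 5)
Your proof is correct and follows essentially the same route as the paper: invoke Theorem \ref{thm:rescaled_nah_diagram_necessary_condition_intro} to deduce that no triangle can carry multiplicities $(0,0,1)$, and then apply Theorem \ref{thm:characterisation_fission_graphs} to conclude that $\Gamma$ is a fission graph. The only difference is cosmetic: the paper simply points back to the argument in Prop.~\ref{prop:counter_example_nah_graph} for the $(0,0,1)$ exclusion, while you spell it out directly and additionally make the resulting complete $k$-partite structure explicit via the equivalence relation.
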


\begin{proof}
Let $\Gamma=(N,B)$ be a simply-laced nonabelian Hodge graph, i.e. such that $B_{i,j}\in \{0,1\}$ for any distinct $i,j\in N$. In the proof of Prop. \ref{prop:counter_example_nah_graph} we have seen that a triangle $ijk$ in $\Gamma$ cannot have edge multiplicities $0,0,1$. The only possibilities for the edges multiplcities in a triangle are thus $0,0,0$, $0,1,1$ and $1,1,1$. This implies that the graph $\Gamma$ satisifies the ultrametric condition, so it is a fission graph.
\end{proof}

\end{document}